\newtheorem{theorem}{Theorem}%[section]
\newtheorem{lemma}[theorem]{Lemma}
\newtheorem{conject}[theorem]{Conjecture}
\newtheorem{prop}[theorem]{Proposition}
\renewcommand{\leq}{\leqslant}
\renewcommand{\geq}{\geqslant}
\newcommand\alert[1]{{\bf #1}}%{{\color{Gray}#1}}
\begin{document}

\title{Towards a Better Understanding of the Semigroup Tree}
\author{Maria Bras-Amor\'os\thanks{maria.bras@urv.cat, Departament d'Enginyeria Inform\`atica i Matem\`atiques, Universitat Rovira i Virgili, Av. Pa\"\i sos Catalans 26, 43007 Tarragona, Catalonia}
\ and Stanislav Bulygin \thanks{bulygin@mathematik.uni-kl.de, Department of Mathematics,
University of Kaiserslautern, P.O. Box 3049, 67653 Kaiserslautern, Germany}}

\maketitle

\begin{abstract}  
  In this paper we elaborate on the structure of the semigroup tree and the regularities on the number of descendants of each node observed in
  \cite{Bras:ngbounds}. These regularites admit two different types of behavior and in this work 
  we investigate which of the two types takes place in particular for well-known classes of semigroups. Also we study the question of what kind  
  of chains appear in the tree and characterize the properties (like being (in)finite) thereof. 
  We conclude with some thoughts that show how this study of the semigroup tree may help
  in solving the conjecture of Fibonacci-like behavior of the number of semigroups with given genus.
\end{abstract}

\noindent {\bf Keywords}: Numerical semigroup, Fibonacci numbers.

\section{Introduction}

A numerical semigroup is a subset of the non-negative integers ${\mathbb N}_0$
which is closed under addition, contains $0$ and its complement in
${\mathbb N}_0$ is finite.
The elements in this complement are called {\it gaps} and the number of gaps of a numerical semigroup is its {\it genus}. 
The smallest integer in a numerical semigroup from which all 
larger integers belong to the numerical semigroup is called the 
{\it conductor} of the numerical semigroup. Notice that the conductor 
of a numerical semigroup is exactly the largest gap 
(known as its {\it Frobenius number}) plus one.

It can be shown that each numerical semigroup has 
a unique minimal set of generators.
The numerical semigroups of genus $g$ can be obtained 
from the numerical semigroups of genus $g-1$ 
by taking out one by one the generators that are 
larger than or equal to the conductor of each semigroup. 
This leads to an infinite tree containing all numerical semigroups,
with root corresponding to the trivial semigroup
and where each level of nodes represents numerical semigroups of genus given by the level.
The parent of a numerical semigroup is obtained by adding to the semigroup
its Frobenius number.
This tree is illustrated in Figure~\ref{fig:tree},
where we used $\langle a_1,\dots,a_k\rangle$ to denote the numerical semigroup
generated by $a_1,\dots,a_k$.
 This construction was already considered in \cite{Rosales:families,RoGaGaJi:fundamentalgaps,RoGaGaJi:oversemigroups}.

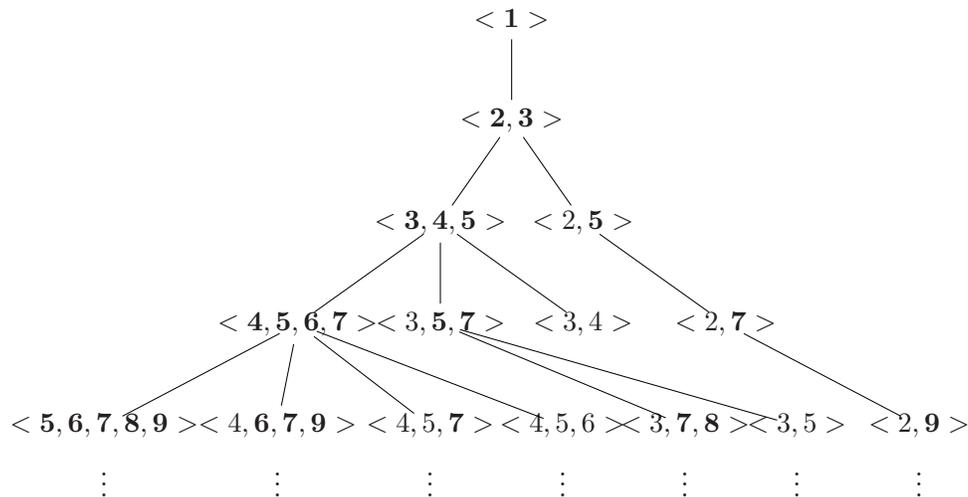
\begin{figure}
\compatiblegastexun
\setvertexdiam{4}
\letvertex S1=(40,50)
\letvertex S23=(40,40)
\letvertex S345=(33,30)
\letvertex S25=(47,30)
\letvertex S4567=(19,20)
\letvertex S357=(33,20)
\letvertex S34=(47,20)
\letvertex S27=(61,20)
\letvertex S56789=(0,10)
\letvertex P56789=(0,5)
\letvertex S4679=(17,10)
\letvertex P4679=(17,5)
\letvertex S457=(32,10)
\letvertex P457=(32,5)
\letvertex S456=(45,10)
\letvertex P456=(45,5)
\letvertex S378=(57,10)
\letvertex P378=(57,5)
\letvertex S35=(68,10)
\letvertex P35=(68,5)
\letvertex S29=(80,10)
\letvertex P29=(80,5)
\begin{center}
\resizebox{.8\textwidth}{!}{
{\small
\begin{picture}(80,50)
\drawvertex(S1){$<{\bf 1}>$}
\drawvertex(S23){$<{\bf 2},{\bf 3}>$}
\drawvertex(S345){$<{\bf 3},{\bf 4},{\bf 5}>$}
\drawvertex(S25){$<2,{\bf 5}>$}
\drawvertex(S4567){$<{\bf 4},{\bf 5},{\bf 6},{\bf 7}>$}
\drawvertex(S357){$<3,{\bf 5},{\bf 7}>$}
\drawvertex(S34){$<3,4>$}
\drawvertex(S27){$<2,{\bf 7}>$}
\drawvertex(S56789){$<{\bf 5},{\bf 6},{\bf 7},{\bf 8},{\bf 9}>$}
\drawvertex(P56789){$\vdots$}
\drawvertex(S4679){$<4,{\bf 6},{\bf 7},{\bf 9}>$}
\drawvertex(P4679){$\vdots$}
\drawvertex(S457){$<4,5,{\bf 7}>$}
\drawvertex(P457){$\vdots$}
\drawvertex(S456){$<4,5,6>$}
\drawvertex(P456){$\vdots$}
\drawvertex(S378){$<3,{\bf 7},{\bf 8}>$}
\drawvertex(P378){$\vdots$}
\drawvertex(S35){$<3,5>$}
\drawvertex(P35){$\vdots$}
\drawvertex(S29){$<2,{\bf 9}>$}
\drawvertex(P29){$\vdots$}
\drawundirectededge(S1,S23){}
\drawundirectededge(S23,S345){}
\drawundirectededge(S23,S25){}
\drawundirectededge(S345,S4567){}
\drawundirectededge(S345,S357){}
\drawundirectededge(S345,S34){}
\drawundirectededge(S25,S27){}
\drawundirectededge(S4567,S56789){}
\drawundirectededge(S4567,S4679){}
\drawundirectededge(S4567,S457){}
\drawundirectededge(S4567,S456){}
\drawundirectededge(S357,S378){}
\drawundirectededge(S357,S35){}
\drawundirectededge(S27,S29){}
\end{picture}}}\end{center}
\caption{Recursive construction of the numerical semigroups of genus $g$ from the numerical
semigroups of genus $g-1$. Generators larger than the conductor are written in bold face.}
\label{fig:tree}
\end{figure}

The number $n_g$ of all numerical semigroups of genus $g$ has been studied in \cite{Bras:Fibonacci,Bras:ngbounds}. 
In \cite{Bras:Fibonacci} it is conjectured that $n_g$ asymptotically behaves 
like the Fibonacci numbers.
That is, $n_g\geq n_{g-1}+n_{g-2}$,
$\lim_{g\to\infty}(n_{g-1}+n_{g-2})/n_g=1$,
and  
$n_g/n_{g-1}$ approaches the golden ratio.
In \cite{Bras:ngbounds} 
the tree of numerical semigroups is used to derive, for $g\geq 3$, the bounds
$2F_{g}\leq n_g\leq 1+3\cdot 2^{g-3}$, where $F_g$ denotes the $g$-th Fibonacci number.
The goal of this paper is providing results for better understanding the
semigroup tree and giving possible directions for attacking 
the previous conjecture.
The bounds given in \cite{Bras:ngbounds} are a consequence of
the fact that only two kinds of generators exist in a numerical semigroup
larger than or equal to its conductor.
In Section~\ref{sec:knownclasses} we call these two kinds of generators
{\em weak} and {\em strong} and we study 
their existence in three well-known classes of numerical semigroups:
symmetric, pseudo-symmetric, and Arf semigroups.

In Section~\ref{sec:infinitechains} we 
analyze which nodes have an infinite number of descendants.
For the nodes having a finite number of descendants 
we give a way to determine the descendant at largest distance;
for the nodes having an infinite number of descendants
we determine the number of infinite chains in which the semigroup lies.
It turns out here that 
primality and coprimality of integers appear in the scene as discriminating
factors. Some results related to weak and strong generators of semigroups
lying in infinite chains are also given.

In the last section we give what we think should be future directions 
for attacking the conjecture on the Fibonacci-like behavior of $n_g$
and how the results presented in the first sections could help.

\section{Behavior of known classes of numerical semigroups}
\label{sec:knownclasses}

The enumeration $\lambda$ of a numerical semigroup $\Lambda$ is the unique increasing bijective map ${\mathbb N}_0\rightarrow\Lambda$. Usually 
$\lambda(i)$ is denoted $\lambda_i$.
It is easy to check that
if $c$ and $g$ are the conductor and the genus of $\Lambda$ then
$\lambda_{c-g}=c$ and for
$\lambda_i\geq c$, $\lambda_i=i+g$.
A semigroup for which $\lambda_1=c$,
i.e. a semigroup of the form $\{0\}\cup[c,\infty)$,
is called \emph{ordinary}.

It was shown in \cite{Bras:ngbounds} that the next Lemma holds.

\begin{lemma}
\label{lemma:reasonforweakstrong}
If $\lambda_{i_1}<\lambda_{i_2}<\dots<\lambda_{i_n}$ 
are the generators of a non-ordinary numerical semigroup $\Lambda$
that are larger than or equal to its conductor then 
the generators of $\Lambda\setminus\{\lambda_{i_j}\}$ that are larger than 
or equal to its conductor
are either $\lambda_{i_{j+1}},\dots,\lambda_{i_n}$
or $\lambda_{i_{j+1}},\dots,\lambda_{i_n},\lambda_{i_j}+\lambda_1$. 
\end{lemma}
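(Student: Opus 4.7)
The plan is to directly analyze how the set of minimal generators above the conductor transforms when one passes from $\Lambda$ to $\Lambda' := \Lambda \setminus \{\lambda_{i_j}\}$. Since $\lambda_{i_j}$ is a minimal generator, $\Lambda'$ is automatically a numerical semigroup. First I would pin down the new conductor: because $\lambda_{i_j} \geq c$, every integer strictly greater than $\lambda_{i_j}$ already lies in $\Lambda$ and is left untouched, so $\lambda_{i_j}$ becomes the new Frobenius number and the conductor of $\Lambda'$ is $c' = \lambda_{i_j} + 1$.

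Next I would split the candidate minimal generators of $\Lambda'$ lying in $[c', \infty)$ into two classes: those that were already minimal generators of $\Lambda$, and those that were not. For the first class, each $\lambda_{i_k}$ with $k > j$ remains a minimal generator of $\Lambda'$, because no decomposition $\lambda_{i_k} = a + b$ exists in $\Lambda$ (and hence none in $\Lambda' \subseteq \Lambda$), and $\lambda_{i_k} > \lambda_{i_j}$ places it above $c'$. Conversely, any minimal generator of $\Lambda$ lying in $[c',\infty)$ must by definition be one of these $\lambda_{i_k}$.

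The bulk of the argument is the second class. Let $x \in \Lambda'$ with $x \geq c'$ fail to be a minimal generator of $\Lambda$, so some decomposition $x = a + b$ with $a,b \in \Lambda \setminus \{0\}$ exists. For $x$ to be a minimal generator of $\Lambda'$, every such decomposition must involve $\lambda_{i_j}$. I would test the specific candidate $x = \lambda_1 + (x - \lambda_1)$. If $x - \lambda_1 \in \Lambda$, then since $\Lambda$ is non-ordinary we have $\lambda_1 < c \leq \lambda_{i_j}$, so $\lambda_1 \neq \lambda_{i_j}$, forcing $x - \lambda_1 = \lambda_{i_j}$, i.e., $x = \lambda_{i_j} + \lambda_1$. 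If $x - \lambda_1 \notin \Lambda$, then $x - \lambda_1 \leq c - 1$, so $x \leq c - 1 + \lambda_1$; but the hypothesized decomposition through $\lambda_{i_j}$ gives $x \geq \lambda_{i_j} + \lambda_1 \geq c + \lambda_1$, a contradiction. Hence the only possible ``new'' minimal generator of $\Lambda'$ above $c'$ is $\lambda_{i_j} + \lambda_1$, which may or may not in fact be minimal in $\Lambda'$; this produces the two alternatives in the statement.

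The main obstacle is this final case analysis, and it rests entirely on testing the single decomposition $x = \lambda_1 + (x - \lambda_1)$ together with the non-ordinary hypothesis $\lambda_1 < c$, which rules out the spurious identification $\lambda_1 = \lambda_{i_j}$. A small bookkeeping check at the end is that $\lambda_{i_j} + \lambda_1$ is never accidentally equal to some $\lambda_{i_k}$ with $k > j$: if it were, then $\lambda_{i_k}$ would be a sum of two nonzero elements of $\Lambda$, contradicting its minimality. Thus the two cases of the lemma are disjoint and exhaustive.
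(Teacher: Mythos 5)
Your argument is correct and complete. Note that the paper itself gives no proof of this lemma; it is quoted from the reference \cite{Bras:ngbounds}, so there is nothing in this text to compare against line by line. Your route is the natural one and all the delicate points are handled: the new conductor is correctly identified as $\lambda_{i_j}+1$, the old effective generators above $\lambda_{i_j}$ are shown to survive, and the key step --- that any \emph{new} generator $x$ of $\Lambda\setminus\{\lambda_{i_j}\}$ above the new conductor must equal $\lambda_{i_j}+\lambda_1$ --- is settled by testing the single decomposition $x=\lambda_1+(x-\lambda_1)$ and using non-ordinariness ($\lambda_1<c\leq\lambda_{i_j}$) to exclude $\lambda_1=\lambda_{i_j}$, together with the bound $x\geq\lambda_{i_j}+\lambda_1$ forced by any decomposition through $\lambda_{i_j}$. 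The closing observation that $\lambda_{i_j}+\lambda_1$ cannot coincide with any $\lambda_{i_k}$, $k>j$, is exactly the check needed to see the two alternatives are genuinely as stated.
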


Motivated by this lemma, we call the generators of a non-ordinary
numerical semigroup 
that are larger than or equal to its conductor, the \emph{effective generators}
and we say that an effective generator $\lambda_{i_j}$ is \emph{strong} if the set of effective generators of 
$\Lambda\setminus\{\lambda_{i_j}\}$ is $\lambda_{i_{j+1}},\dots,\lambda_{i_n},\lambda_{i_j}+\lambda_1$.
An effective generator that is not strong is called a \emph{weak generator}.

Finally we say that
%\begin{defi}
  a \emph{leave} is a node with no descendants, a \emph{stick} is a node with exactly one descendant and  a \emph{bush} is a node with
  two or more descendants.
%\end{defi}

\subsection{Symmetric semigroups}

{\em Symmetric semigroups} are those
semigroups for which the conductor is twice the genus. Symmetric semigroups 
and their applications to coding theory
have been 
studied, among others, in \cite{BaDoFo,CaFa:SingularPlaneModels,HoLiPe,KiPe:telescopic}. 
An important property of symmetric semigroups
is that if $c$ and $g$ are the genus and the conductor of a symmetric semigroup
$\Lambda$ then any integer $i$ is a gap of $\Lambda$ if and only if $c-1-i$ is a non-gap.

The semigroups of the form $\langle 2,2n+1\rangle, n\ge 1$ are symmetric. 
They are called {\em hyperelliptic semigroups}.

\begin{lemma}
  Hyperelliptic numerical semigroups
  are sticks and the unique 
  effective generator, which is the conductor plus one, is strong.
  \label{hyper}
\end{lemma}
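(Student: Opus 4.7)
The plan is to analyze $\Lambda = \langle 2, 2n+1\rangle$ concretely, read off its effective generators from the minimal generating set, and then use Lemma~\ref{lemma:reasonforweakstrong} to determine whether the single effective generator is strong or weak, finishing with a short parity argument.

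First I would compute the basic invariants. The gaps of $\Lambda$ are exactly the odd integers $1, 3, \ldots, 2n-1$, so the genus equals $n$, the Frobenius number equals $2n-1$, and the conductor equals $c = 2n$. Explicitly,
\[
\Lambda \;=\; \{0, 2, 4, \ldots, 2n-2\} \cup [2n, \infty),
\]
and $\lambda_1 = 2$. (The case $n=1$ gives $\{0\}\cup[2,\infty)$, which is ordinary and thus falls outside the scope of the lemma, so I would assume $n\geq 2$, equivalently $\lambda_1 < c$.)

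Next I would identify the effective generators. The minimal generating set of $\Lambda$ is $\{2, 2n+1\}$, and among these only $2n+1$ is $\geq c = 2n$. Hence $\Lambda$ has a unique effective generator, namely $2n+1 = c+1$. Since by definition the children of $\Lambda$ in the tree are obtained by removing one effective generator at a time, this immediately shows that $\Lambda$ has exactly one descendant, so it is a stick.

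It remains to show that this effective generator is strong. By Lemma~\ref{lemma:reasonforweakstrong} applied to the unique effective generator $\lambda_{i_1}=2n+1$ (with no $\lambda_{i_{j+1}},\dots,\lambda_{i_n}$), the effective generators of $\Lambda' := \Lambda \setminus \{2n+1\}$ are either $\varnothing$ (weak case) or $\{2n+1+\lambda_1\}=\{2n+3\}$ (strong case). To confirm the strong case, I would verify that $2n+3$ is indeed a minimal generator of $\Lambda'$: the nonzero elements of $\Lambda'$ strictly less than $2n+3$ are $2, 4, \ldots, 2n, 2n+2$, all even, so any sum of two such elements is even, while $2n+3$ is odd. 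Therefore $2n+3$ cannot be written as a sum of smaller nonzero elements of $\Lambda'$, which forces it to be a minimal (and hence effective) generator, and $2n+1$ is strong.

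The argument is essentially direct; the only mildly subtle point is the parity observation in the final step, which is what distinguishes strength from weakness here.
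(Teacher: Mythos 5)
Your proof is correct. Note that the paper actually states Lemma~\ref{hyper} without any proof, so there is nothing to compare against; your argument is a sound direct verification of the omitted details. The computation of the invariants of $\langle 2,2n+1\rangle$, the identification of $2n+1=c+1$ as the unique effective generator, and the use of the dichotomy in Lemma~\ref{lemma:reasonforweakstrong} together with the parity argument showing that $2n+3$ is a minimal generator of $\Lambda\setminus\{2n+1\}$ (and lies above its conductor $2n+2$, hence is effective) are all accurate. You were also right to set aside the case $n=1$: there $\langle 2,3\rangle$ is the ordinary semigroup of genus $1$, for which the notion of effective generator is not defined in the paper (and which is in fact a bush in Figure~\ref{fig:tree}), so the lemma must implicitly be read as concerning the non-ordinary hyperelliptic semigroups.
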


Given a numerical semigroup $\Lambda$ with enumeration $\lambda$,
the associated $\nu$-sequence is defined by 
$\nu_i=\#\{j\in{\mathbb N}_0:\lambda_i-\lambda_j\in\Lambda\}$.
It is proven in
\cite[Theorem 3.8]{KiPe:telescopic}
that 
\begin{equation}
  \label{eq:D(i)}
  \nu_i=i-g(i)+\#D(i)+1,
\end{equation} 
where $g(i)$ is the number of gaps smaller than $\lambda_i$,
and $D(i)=\{l\not\in\Lambda|\lambda_i-l\not\in\Lambda\}$.
Notice that an element $\lambda_i\in\Lambda$ is a generator of $\Lambda$ if and only if $\nu_i=2$.

\begin{lemma}
\label{lemma:D(i)}
For a numerical semigroup with enumeration $\lambda$ and conductor $c$,
an element $\lambda_i\geq c$ is a generator
if and only if $$\#D(i)=g-i+1.$$
\end{lemma}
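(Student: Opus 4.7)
The plan is to simply combine formula (\ref{eq:D(i)}) with the characterization $\lambda_i$ is a generator $\iff \nu_i = 2$, after pinning down the value of $g(i)$ in the regime $\lambda_i \geq c$.

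First I would observe that when $\lambda_i \geq c$, every gap of $\Lambda$ is strictly smaller than $\lambda_i$. Indeed, by the very definition of the conductor, the largest gap is $c-1$, so all $g$ gaps of $\Lambda$ lie in the interval $[1,c-1] \subseteq [1,\lambda_i - 1]$. Consequently $g(i) = g$ for every $i$ with $\lambda_i \geq c$.

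Plugging $g(i) = g$ into (\ref{eq:D(i)}) yields
\begin{equation*}
\nu_i = i - g + \#D(i) + 1
\end{equation*}
whenever $\lambda_i \geq c$. Since $\lambda_i$ is a generator of $\Lambda$ if and only if $\nu_i = 2$, this condition is equivalent to $i - g + \#D(i) + 1 = 2$, i.e. to $\#D(i) = g - i + 1$, which is exactly the claimed equivalence.

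There is essentially no obstacle here: the lemma is a direct rewriting of (\ref{eq:D(i)}) once one notes that the restriction $\lambda_i \geq c$ forces $g(i)$ to equal the full genus $g$. The only point one must be slightly careful about is the edge case $\lambda_i = c$, but even there $c - 1$ is still (weakly) less than $\lambda_i$, so $g(i) = g$ holds and the argument goes through unchanged.
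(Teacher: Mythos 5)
Your proof is correct and is essentially the paper's own argument: the paper likewise derives the lemma by combining equality~(\ref{eq:D(i)}) with the characterization $\nu_i=2$ of generators, leaving the observation $g(i)=g$ for $\lambda_i\geq c$ implicit, which you have simply spelled out. (Minor nitpick: $c-1$ is \emph{strictly} less than $\lambda_i=c$, not just weakly, but this does not affect anything.)
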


\begin{proof}
It follows from equality~(\ref{eq:D(i)})
and from the fact that a non-gap $\lambda_i$ is
a generator if and only if $\nu_i=2$. 
\end{proof}

\begin{lemma}
  Non-hyperelliptic symmetric semigroups are leaves.
  \label{symm-leave}
\end{lemma}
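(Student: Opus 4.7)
The plan is to prove that a non-hyperelliptic symmetric $\Lambda$ has no effective generator, i.e., its node has no descendants. Let $c=2g$; then $\lambda_g=c$, so the effective generators are the $\lambda_i$ with $i\geq g$, and by Lemma~\ref{lemma:D(i)} it suffices to check that $\#D(i)\neq g-i+1$ for every $i\geq g$. I would split according to the value of $i$.

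For $i\geq g+2$, the equality fails trivially: $g-i+1\leq -1$, while $\#D(i)\geq 0$. This step uses nothing beyond the sign of $g-i+1$.

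For $i=g$, I would show $\#D(g)\geq 2$ by exhibiting two distinct elements. The symmetry property gives that $c-1$ is a gap (since its symmetric partner $c-1-(c-1)=0$ is a non-gap), and $1$ is a gap for any non-trivial semigroup; both sum to $c=\lambda_g$, so $1,c-1\in D(g)$. The only symmetric semigroup of genus $1$ is the hyperelliptic $\langle 2,3\rangle$, so our hypothesis forces $g\geq 2$, hence $c\geq 4$ and $1\neq c-1$, giving $\#D(g)\geq 2\neq 1$.

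For $i=g+1$, I would similarly show $\#D(g+1)\geq 1$, and this is where the non-hyperelliptic hypothesis is genuinely used: since $2\notin\Lambda$, the integer $2$ is a gap; combined with the gap $c-1$ from above, we have $2+(c-1)=c+1=\lambda_{g+1}$, so $2\in D(g+1)$ and $\#D(g+1)\geq 1\neq 0$. The case $i=g+1$ is the only delicate one, and it also explains the dichotomy with Lemma~\ref{hyper}: once $2\in\Lambda$ the pair $(2,c-1)$ no longer certifies a member of $D(g+1)$, so in the hyperelliptic case $c+1$ does become the unique (strong) effective generator, whereas under the non-hyperelliptic hypothesis it is blocked from being a generator and $\Lambda$ turns into a leaf.
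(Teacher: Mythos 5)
Your proof is correct and follows essentially the same route as the paper: both reduce to Lemma~\ref{lemma:D(i)} and exhibit the gap pairs $(1,c-1)$ summing to $c$ and $(2,c-1)$ summing to $c+1$ to block the only two candidate effective generators. Your treatment is slightly more explicit about the trivial case $i\geq g+2$ and about why $c\neq 2$, but the substance is identical.
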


\begin{proof}
For a symmetric semigroup with conductor $c$ and genus $g$,
$\lambda_i\geq c$ if and only if $i\geq g$.
Hence, by Lemma~\ref{lemma:D(i)},
$\lambda_i\geq c$ can only be a generator if
$\lambda_i=c$ and $\#D(i)=1$
or if $\lambda_i=c+1$ and $\#D(i)=0$.
The first situation is only possible when $1=c-1$ because $1,c-1\not\in\Lambda, 1+(c-1)=c$ and otherwise,
$\#D(i)>1$. But $1=c-1$ would mean that $c=2$ and thus
the numerical semigroup would be hyperelliptic.
The second situation is only possible for hyperelliptic semigroups since for other semigroups,
$2$ and $c-1$ are gaps and $2+(c-1)=c+1$. This implies $\#D(i)>0$.
\end{proof}

As an example of non-hyperelliptic symmetric semigroup
consider $\Lambda=\{0,4,5,$ $8,9,10\}\cup[12,\infty)$.
In this case the generators are $4$ and $5$
and none of them is effective.

\subsection{Pseudo-symmetric semigroups}

{\em Pseudo-symmetric semigroups} are those
semigroups for which the conductor is twice the genus minus one. 
An important property of pseudo-symmetric semigroups
analogous to the one for symmetric semigroups is that
if $c$ and $g$ are the genus and the conductor of a pseudo-symmetric semigroup
$\Lambda$ then any integer $i$ different from $(c-1)/2$ is a gap of $\Lambda$ if and only if $c-1-i$ is a non-gap.

\begin{lemma}
  For a non-ordinary numerical semigroup $\Lambda$ with enumeration $\lambda$ and conductor $c$,
  a non-gap $\lambda_k\neq 2\lambda_1$ is a strong generator if and only if $\lambda_k\geq c$
  and $\nu_{k+\lambda_1}=4$.
  \label{strong-characterize}
\end{lemma}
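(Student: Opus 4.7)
The plan is to connect the notion of ``strong'' to the $\nu$-sequence by comparing $\nu_{k+\lambda_1}$ computed in $\Lambda$ against the analogous value computed in $\Lambda' := \Lambda \setminus \{\lambda_k\}$. By Lemma~\ref{lemma:reasonforweakstrong}, $\lambda_k$ is a strong generator precisely when it is an effective generator of $\Lambda$ and $\lambda_k + \lambda_1$ is an effective generator of $\Lambda'$. Since the conductor of $\Lambda'$ is $\lambda_k + 1$ once $\lambda_k$ is removed, the inequality $\lambda_k + \lambda_1 > \lambda_k$ makes the condition ``effective in $\Lambda'$'' automatic as soon as $\lambda_k+\lambda_1$ is a generator of $\Lambda'$. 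So the heart of the matter is translating ``$\lambda_k+\lambda_1$ is a generator of $\Lambda'$'' into a condition on $\nu_{k+\lambda_1}$.

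First I would enumerate the ``trivial'' contributions counted by $\nu_{k+\lambda_1}$, namely the indices $j$ with $\lambda_j\in\{0,\lambda_1,\lambda_k,\lambda_k+\lambda_1\}$. Since $\Lambda$ is non-ordinary and $\lambda_k\geq c$ one has $0<\lambda_1<c\leq\lambda_k$, so these four values are pairwise distinct, hence $\nu_{k+\lambda_1}\geq 4$. Passing from $\Lambda$ to $\Lambda'$ kills exactly two of these contributions: the one with $\lambda_j=\lambda_k$ (no longer in $\Lambda'$) and the one with $\lambda_j=\lambda_1$ (whose partner $\lambda_k$ is no longer in $\Lambda'$); every other index $j$ counted by $\nu_{k+\lambda_1}$ remains valid in $\Lambda'$. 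This would give $\nu^{\Lambda'}(\lambda_k+\lambda_1)=\nu_{k+\lambda_1}-2$, so that $\lambda_k+\lambda_1$ is a generator of $\Lambda'$ (equivalently $\nu^{\Lambda'}=2$) if and only if $\nu_{k+\lambda_1}=4$.

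From this identity the forward direction reads off immediately: a strong generator $\lambda_k$ is effective and forces $\lambda_k+\lambda_1$ to be an effective generator of $\Lambda'$, yielding $\nu_{k+\lambda_1}=4$. For the converse, assuming $\lambda_k\geq c$ and $\nu_{k+\lambda_1}=4$ the computation already promotes $\lambda_k+\lambda_1$ to a generator of $\Lambda'$; what remains, and what I expect to be the main obstacle, is verifying that $\lambda_k$ itself is a generator of $\Lambda$, so that Lemma~\ref{lemma:reasonforweakstrong} is applicable and the conclusion ``strong'' is legitimate.

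The hypothesis $\lambda_k\neq 2\lambda_1$ is used precisely in this last step. Suppose $\lambda_k=\lambda_a+\lambda_b$ were a nontrivial decomposition with $\lambda_a,\lambda_b\in\Lambda\setminus\{0,\lambda_k\}$. Then both $(\lambda_a+\lambda_1,\lambda_b)$ and $(\lambda_a,\lambda_b+\lambda_1)$ are decompositions of $\lambda_k+\lambda_1$ in $\Lambda$. The first duplicates a trivial decomposition only when $\lambda_a+\lambda_1=\lambda_k$, i.e.\ $\lambda_b=\lambda_1$; symmetrically, the second is not new only when $\lambda_a=\lambda_1$. Both happening simultaneously forces $\lambda_a=\lambda_b=\lambda_1$, hence $\lambda_k=2\lambda_1$, which is excluded. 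Thus at least one genuinely new contribution to $\nu_{k+\lambda_1}$ arises, giving $\nu_{k+\lambda_1}\geq 5$, a contradiction. Once $\lambda_k$ is known to be a generator, it is an effective one by the hypothesis $\lambda_k\geq c$, and applying Lemma~\ref{lemma:reasonforweakstrong} in the case where the new effective generator $\lambda_k+\lambda_1$ appears completes the proof.
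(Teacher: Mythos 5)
Your proof is correct and follows essentially the same route as the paper: both arguments reduce strongness to counting the ways $\lambda_k+\lambda_1$ splits inside $\Lambda$ versus inside $\Lambda\setminus\{\lambda_k\}$, using the $\nu=2$ characterization of generators and Lemma~\ref{lemma:reasonforweakstrong}. Your packaging of the count as the single identity $\nu^{\Lambda'}(\lambda_k+\lambda_1)=\nu_{k+\lambda_1}-2$ is a slightly tidier organization, and you are in fact more explicit than the paper's own proof about exactly where the hypothesis $\lambda_k\neq 2\lambda_1$ is needed (to rule out $\lambda_a=\lambda_b=\lambda_1$, the one nontrivial decomposition of $\lambda_k$ that would fail to produce a fifth contribution to $\nu_{k+\lambda_1}$).
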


\begin{proof}
If $\lambda_k$ is strong, then by definition $\lambda_k\geq c$. Now, $\nu_{k+\lambda_1}\geq 4$
because $\lambda_{k+\lambda_1}-\lambda_1=\lambda_k$, 
$\lambda_{k+\lambda_1}-\lambda_k=\lambda_1$,
and $0,\lambda_1,\lambda_k,\lambda_{k+\lambda_1}$ are different.
If $\nu_{k+\lambda_1}>4$ this means that there exists at least one $\lambda_l$ with $l$ different
from $k$ such that $\lambda_{k+\lambda_1}-\lambda_l\in\Lambda$
and $\lambda_{k+\lambda_1}-\lambda_l\neq \lambda_k$.
Then $\lambda_{k+\lambda_1}$ is not a generator of $\Lambda\setminus\{\lambda_k\}$.

On the other hand, if $\lambda_k\geq c$
and $\nu_{k+\lambda_1}=4$ this means that $\lambda_k$ is a generator.
Indeed, if $\lambda_k=\lambda_l+\lambda_m$ with $0<l\leq m<k$ then
$\lambda_{k+\lambda_1}-\lambda_l=\lambda_m+\lambda_1\in\Lambda$, so, 
$\nu_{k+\lambda_1}>4$. Furthermore, since $\nu_{k+\lambda_1}=4$ this means that $\lambda_k+\lambda_1$ can only be subtracted by 
$0,\lambda_1,\lambda_k,\lambda_{k+\lambda_1}$ within the numerical semigroup. Consequently,
$\lambda_k+\lambda_1$ is a generator of $\Lambda\setminus\{\lambda_k\}$.
\end{proof}

\begin{lemma}
\label{lemma:357}
\begin{enumerate}
\item
The unique pseudo-symmetric semigroup of genus $g$ 
with only one interval of non-gaps 
between $0$ and the conductor is
$\Lambda_{{ps}_{g}}=\{0,g,g+1,\dots,2g-3\}\cup[2g-1,\infty)$.
\item
  The numerical semigroup
  $\Lambda_{{ps}_3}
=\{0,3\}\cup[5,\infty)$, has 5 and 7 as the only effective generators.
  The generator $5$ is strong and the generator $7$ is weak.
\item
  The numerical semigroup
  $\Lambda_{{ps}_4}
=\{0,4,5\}\cup[7,\infty)$, has 7 as the only effective generator and it is strong.
\item
The numerical semigroup
  $\Lambda_{{ps}_g}$, for $g\geq 5$ is a stick, its 
unique effective generator is $c$, and it is weak.
\end{enumerate}
\end{lemma}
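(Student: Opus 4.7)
The plan is to handle the four parts in order, with each reducing to a direct calculation once the pseudo-symmetric structure is unwound.

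For part (1), I start from the defining identity $c = 2g-1$. Since $c-1$ is always a gap of any numerical semigroup, $2g-2$ is a gap, so the single interval of non-gaps strictly between $0$ and $c$ must take the form $[a, 2g-3]$ for some $a \geq 1$. The gaps are then $\{1,\dots,a-1\}\cup\{2g-2\}$, a total of $a$, and setting this equal to $g$ forces $a = g$. Verifying that $\{0\}\cup[g,2g-3]\cup[2g-1,\infty)$ really is a pseudo-symmetric semigroup reduces to a trivial closure check: any two elements of $[g,2g-3]$ sum to at least $2g\geq c$, and an element of $[g,2g-3]$ plus an element of $[2g-1,\infty)$ is larger still.

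For parts (2) and (3), I first identify the minimal generators of $\Lambda_{ps_3}$ and $\Lambda_{ps_4}$ by inspection, obtaining $\{3,5,7\}$ and $\{4,5,7\}$ respectively, and then isolate the effective ones. For each effective generator I apply Lemma~\ref{strong-characterize}, which only requires computing $\nu_{k+\lambda_1}$, i.e., counting the representations of $\lambda_k+\lambda_1$ as a sum of two elements of the semigroup. In $\Lambda_{ps_3}$, with $\lambda_1=3$, the sum $5+3=8$ has exactly the four trivial representations, so $5$ is strong, while $7+3=10$ has the extra representation $5+5$, pushing $\nu \geq 5$, so $7$ is weak. In $\Lambda_{ps_4}$, with $\lambda_1=4$, the sum $7+4=11$ admits only the four trivial representations, so $7$ is strong.

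Part (4) is the case that contains the actual content. First I argue that the minimal generators of $\Lambda_{ps_g}$ are exactly $\{g, g+1, \dots, 2g-3\}\cup\{2g-1\}$: every element of $[g,2g-3]$ is strictly less than $2g$, hence cannot be a sum of two positive semigroup elements, while $2g-1$ cannot be attained either, since the smallest such sum is $2g$. Only $2g-1$ is effective, so $\Lambda_{ps_g}$ is a stick. To show $2g-1$ is weak, I invoke Lemma~\ref{strong-characterize}: with $\lambda_1=g$ and $\lambda_k=c=2g-1$, one has $\lambda_k+\lambda_1=3g-1$. The four trivial representations $0+(3g-1)$, $g+(2g-1)$, $(2g-1)+g$, $(3g-1)+0$ are always present; but for $g\geq 5$ the additional representation $3g-1=(g+2)+(2g-3)$ sits inside $\Lambda_{ps_g}$, exactly because $g+2\leq 2g-3$ is equivalent to $g\geq 5$. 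Therefore $\nu_{k+\lambda_1}>4$, and Lemma~\ref{strong-characterize} declares $2g-1$ weak.

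The only genuinely nontrivial step is spotting the decomposition $(g+2)+(2g-3)$ of $3g-1$, which is precisely what fails for $g=4$ and succeeds for $g\geq 5$; the rest is bookkeeping enabled by the $\nu$-based characterization of strong generators.
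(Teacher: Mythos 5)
Your overall strategy is close to the paper's own (direct verification for the small cases plus the $\nu$-criterion of Lemma~\ref{strong-characterize} for strong/weak), and your treatment of the weakness of $c=2g-1$ via the decomposition $3g-1=(g+2)+(2g-3)$, valid exactly for $g\geq 5$, is a correct and slightly more explicit version of the paper's inequality $g+g\leq 3g-1\leq (2g-3)+(2g-3)$. However, there is a genuine gap in part (4): you assert that the minimal generators of $\Lambda_{ps_g}$ are \emph{exactly} $\{g,\dots,2g-3\}\cup\{2g-1\}$, but your argument only shows that these elements \emph{are} generators (each being below $2g$, the smallest sum of two positive elements). You never rule out minimal generators $\geq 2g$, and that is precisely the half of the claim that carries the content: for $g=3$ the integer $4g-5=7\geq 2g$ \emph{is} a minimal generator and $\Lambda_{ps_3}$ is not a stick, so your stated reasoning --- which nowhere uses $g\geq 5$, or even $g\geq 4$ --- cannot suffice as written. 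The paper closes this by checking that $[2g,4g-6]=[g,2g-3]+[g,2g-3]$, that $4g-5=(2g-1)+(2g-4)$ is a sum of two non-gaps precisely when $g\geq 4$, that $4g-4=(2g-1)+(2g-3)$ and $4g-3=2g+(2g-3)$ are sums of non-gaps, and that everything $\geq 4g-2=2c$ trivially is. Some version of this case analysis is required; without it, ``only $2g-1$ is effective'' is unproved.

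A smaller slip occurs in part (1): from ``$2g-2$ is a gap'' you conclude that the interval of non-gaps is $[a,2g-3]$, but that only bounds its right endpoint from above; a priori the interval could be $[a,b]$ with $b<2g-3$, and the gap count $(a-1)+(2g-2-b)=g$ alone does not determine $a$ and $b$ separately. You need one more ingredient to pin $b=2g-3$ --- e.g.\ the pseudo-symmetry property applied to the gap $1$ (for $g\geq 3$ one has $1\neq (c-1)/2$, so $c-1-1=2g-3$ must be a non-gap), which is exactly the ``main property'' the paper invokes here. Parts (2) and (3) are correct finite checks and agree with the paper's exhaustive verification.
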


\begin{proof}
The proof of statement 1 follows directly from the main property of pseudosymmetric semigroups. 
Statements 2 and 3 can be proved 
by an exhaustive search of generators and 
by checking which are weak and which are strong.
%the fact that from a non-gap $\lambda_i$ is a generator if and only if $\nu_i=2$ and 
%equality~(\ref{eq:D(i)}). This is left as an exercise for the reader.}

Since the conductor of $\Lambda_{{ps}_g}$
is $2g-1$, every integer larger than or equal to 
$4g-2$ will not be a generator.
The integer $4g-3$ is not a generator since $4g-3=g+(2g-3)$.
The integer $4g-4$ is not a generator since $4g-4=(2g-3)+(2g-1)$.
The integers from $2g$ to $4g-6$ are generated by the interval
$g,\dots,2g-3$.
So the only effective generator of 
$\Lambda_{{ps}_g}$ can be $c=2g-1$ and $4g-5$.
It is easy to check that $c$ is a generator. 
If the integer $4g-5$ is 
larger than or equal to
$g+(2g-1)$ then it is not a generator. This is equivalent to $g\geq 4$.

On the other hand,
$2g-1$ is weak if and only if $g+(2g-1)$ is a sum of two non-gaps strictly
smaller than $2g-1$ and this is equivalent to having  
$g+g\leq g+(2g-1)\leq (2g-3)+(2g-3)$, which in turn is equivalent to
$g\geq 5$.
%{\color{green} So $c$ is the only effective divisor. We have $c=2g-1=\lambda_{c-g}=\lambda_{g-1}$. Now $\nu_{g-1+g}=\nu_{2g-1}=
%g+\#D(2g-1)$ due to (\ref{eq:D(i)}). Then according to Lemma \ref{strong-characterize} $c$ is strong if and only if $g+\#D(2g-1)=4$.
%Considering that $\#D(2g-1)\ge 0$, we have that this can only be possible if $g=4$ (cases $g=2$ and $g=3$ and clearly ruled out).
%But then the semigroup is $\{0,4,5\}\cup[7,\infty)$, so $c=7$. It is seen that $\#D(7)=2$, because 1 and 6 add up to 7. 
Thus
$c$ is a weak generator if $g\geq 5$.
\end{proof}

\begin{lemma}
\label{lemma:pseudo-symmetrics_multiplicity_3}
\begin{enumerate}
\item A numerical semigroup is pseudo-symmetric and has $\lambda_1=3$ if and only if 
it is equal to
%\begin{center}{\tiny 
$\Lambda=\{0,3,6,\dots,3k,3(k+1)-1,3(k+1),3(k+2)-1,3(k+2),\dots, 3(2k-1)-1,3(2k-1)\}\cup[3(2k-1)+2,\infty)$
%}\end{center}
 or  
$\Lambda=\{0,3,6,\dots,3k,3(k+1),3(k+1)+1,3(k+2),3(k+2)+1,\dots, 3(2k),3(2k)+1\}\cup[3(2k)+3,\infty)$ for some $k$.
\item 
Each pseudo-symmetric semigroup with $\lambda_1=3$
has a unique effective generator, it is $c+2$
and it is weak.
\item 
The descendants of a
pseudo-symmetric semigroups with $\lambda_1=3$
are  
non-hyperelliptic symmetric semigroups, and thus, leaves.
\end{enumerate}
\end{lemma}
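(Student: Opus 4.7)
The three parts can be proved in sequence: part~1 supplies the explicit combinatorial description of such semigroups, and parts~2 and~3 then follow from it by direct computation combined with Lemmas~\ref{strong-characterize} and~\ref{symm-leave}.

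For part~1, I would exploit that $\lambda_1=3$ forces every positive multiple of~$3$ to lie in $\Lambda$, so $\Lambda$ is determined by its intersection with the residue classes $1$ and $2$ modulo~$3$.  Let $a_1$ and $a_2$ denote the smallest non-gaps in these two classes; the Frobenius number is then $\max(a_1,a_2)-3$.  Since $c-1$ cannot be a positive multiple of~$3$, only $c\equiv 0$ or $c\equiv 2\pmod 3$ are possible, and I would treat these by writing $c=6k+3$ or $c=6k-1$ respectively.  In each case the pseudo-symmetric involution $i\mapsto c-1-i$ fixes the extra gap $(c-1)/2$ and, on the remaining integers, pairs a gap with a non-gap; a short residue-class bookkeeping then forces the non-gaps below~$c$ to be exactly the arithmetic patterns listed in the statement, and the converse (that each such form defines a pseudo-symmetric semigroup with $\lambda_1=3$) is a routine direct check.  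This step is the main obstacle: one must carefully track which residue class hosts $(c-1)/2$ and pin down the exact intervals of non-gaps between $\min(a_1,a_2)$ and $c-1$.

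For part~2, the explicit forms allow the minimal generators to be read off as the smallest non-gaps in each residue class: in each form they are $3$, a \emph{middle} generator $m$ (namely $m=3k+2$ in the first form and $m=3k+4$ in the second), and $c+2$.  Only $c+2$ satisfies $c+2\geq c$, which gives uniqueness of the effective generator.  Weakness is then verified through Lemma~\ref{strong-characterize}: in both forms a direct computation gives $2m=c+5=(c+2)+\lambda_1$, producing an extra representation $m+m$ of $c+5$ as a sum of non-gaps beyond the four standard ones $0+(c+5)$, $(c+5)+0$, $3+(c+2)$ and $(c+2)+3$.  Hence $\nu_{k+\lambda_1}\geq 5>4$, and $c+2$ is weak.

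For part~3, since $c+2$ is the only effective generator, the unique descendant of $\Lambda$ is $\Lambda'=\Lambda\setminus\{c+2\}$ (a semigroup because $c+2$ is a minimal generator of $\Lambda$).  It has conductor $c'=c+3$ and genus $g'=g+1$, and $c=2g-1$ yields $c'=2g+2=2g'$, the defining condition for $\Lambda'$ to be symmetric.  Since $\lambda_1=3\neq 2$, $\Lambda'$ is not hyperelliptic, so Lemma~\ref{symm-leave} then delivers the conclusion that $\Lambda'$ is a leaf.
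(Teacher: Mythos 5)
Your proposal follows essentially the same route as the paper's proof: part~1 via the pseudo-symmetric pairing $i\mapsto c-1-i$ restricted to the residue classes modulo~$3$, part~2 by locating the minimal generators and invoking Lemma~\ref{strong-characterize}, and part~3 by computing the conductor and genus of $\Lambda\setminus\{c+2\}$ and citing Lemma~\ref{symm-leave}. The one point where you genuinely diverge is the proof that $c+2$ is weak: you exhibit the explicit fifth representation $m+m=c+5=(c+2)+\lambda_1$ (with $m=3k+2$, resp.\ $m=3k+4$), whereas the paper obtains $\nu_{k+\lambda_1}=(g+4)-g+\#D(k+\lambda_1)+1=5+\#D(k+\lambda_1)\geq 5$ directly from equality~(\ref{eq:D(i)}), with no need to name a witness or to distinguish the two families; your version is more concrete, the paper's is shorter and case-free. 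One caveat applies equally to your argument and to the paper's: for $k=1$ the first family degenerates to $\{0,3\}\cup[5,\infty)=\langle 3,5,7\rangle$, where your ``middle'' generator $m=3k+2=5$ equals $c$ and is therefore itself effective (and in fact strong, by Lemma~\ref{lemma:357}), so the assertion that $c+2$ is the \emph{unique} effective generator fails there; the paper's proof has the same blind spot, since its decomposition $c=3(2k-1)-1+3$ uses $3(2k-1)-1\in\Lambda$, which is false when $k=1$. Both proofs, and indeed the statement of part~2, implicitly require $k\geq 2$ in the first family (the case $g=3$ being already covered separately by Lemma~\ref{lemma:357}). Apart from this shared boundary issue your argument is sound, though for part~2 you should also record why $c+2$ cannot be a sum of two non-gaps from the other nonzero residue class --- the inequality $2m>c+2$ in both families --- which is the content of the paper's residue computation.
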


\begin{proof}
\begin{enumerate}
\item 
From the property of pseudo-symmetric semigroups
that any non-negative integer $i$ different from $(c-1)/2$ is a gap if and only if 
$c-1-i$ is a non-gap we deduce that each pseudo-symmetric semigroup with $\lambda_1=3$
must be one of the semigroups above.
To see that these semigroups are always pseudo-symmetric,
let us compute the genus and the conductor. In the first case we have that up to $3k$ the semigroup $\Lambda$ has 
  exactly $2k$ gaps: 2 gaps per interval $[3i,3i+2], 0\le i\le k-1$. Then from $3k+1$ to $3(2k-1)$ there are $k-1$ gaps: one per interval
  $[3i+1,3(i+1)], k\le i\le 2k-2$. Together with the gap $3(2k-1)+1$ that makes $g=2k+k-1+1=3k$ gaps. Obviously $c=3(2k-1)+2=2g-1$. So
  $\Lambda$ is pseudo-symmetric. The other case is done analogously and we have $g=2(k+1)+k=3k+2$, $c=3(2k)+3=2g-1$.
\item
An element larger than or equal to the conductor must be $c+i=\lambda_{c-g+i}$ for some $i\geq 0$.
Since these semigroups are pseudo-symmetric,
$c-g+i=g+i-1$. 
Now, by Lemma~\ref{lemma:D(i)}, $\lambda_{g+i-1}$
is a generator if and only if $D(g+i-1)=2-i$.
Since $D(g+i-1)\geq 0$, this means that $i\leq 2$.
So, the only elements larger than or equal to the conductor that can be generators 
are $c,c+1,c+2$.
The elements $c$ and $c+1$ cannot be generators, because $c=3(2k-1)+2=3(2k-1)-1+3, c+1=3(2k-1)+3$ for the first case
and similarly is done for the second. Let us show that $c+2$ is a generator. Consider the first case, the second one is done analogously.
We have $c+2=3(2k-1)+4=6k+1$, so it has residue 1 modulo 3. Note that all the non-gaps less than $c+2$ have residues 0 or 2 modulo 3. So, if
$c+2$ is not a generator, it is a sum of two non-gaps with residue 2. So we have $c+2=3(k+i)-1+3(k+j)-1=6k+3i+3j-2$ for some
$i,j\ge 1$. But then we have that $i+j=1$, a contradiction.

To see that $c+2$ is a weak generator, suppose that $\lambda_k=c+2$. Since $\lambda_k>c$, then $\lambda_{k+\lambda_1}=c+5$, so 
$k+\lambda_1=c+5-g=g+4$. Assume that $c+2$ is a strong generator, then by (\ref{eq:D(i)}) we have
$\nu_{k+\lambda_1}=k+\lambda_1-g+\#D(k+\lambda_1)+1$ and by Lemma~\ref{strong-characterize}, we have $4=g+4-g+\#D(k+\lambda_1)+1$, 
  thus  $1\le\#D(k+\lambda_1)+1=0$, a 
  contradiction.
\item The only descendant of $\Lambda$ is obtained by removing $c+2$. 
The semigroup $\Lambda\setminus\{c+2\}$ is symmetric since its genus is $g+1$
and its conductor is $c+3$, and we have $c+3=2(g+1)$, since $c=2g-1$.
It is easy to see that $\Lambda\setminus\{c+2\}$ 
is 
  non-hyperelliptic semigroup, and thus a leave, cf. Lemma~\ref{symm-leave}.
\end{enumerate}
\end{proof}

\begin{lemma}
Each pseudo-symmetric semigroup with $\lambda_1\neq 3$ and with
more than one interval
of non-gaps between $0$ and the conductor
is a leave.
\end{lemma}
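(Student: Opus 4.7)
The plan is to show that none of $c$, $c+1$, $c+2$ is a generator of $\Lambda$; since these are the only candidates for effective generators in a pseudo-symmetric semigroup, this will force $\Lambda$ to be a leaf.

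First I would verify the reduction to these three candidates via Lemma~\ref{lemma:D(i)}: from $c=2g-1$ one has $\lambda_i\geq c$ iff $i\geq g-1$, while $\#D(i)=g-i+1\geq 0$ forces $i\leq g+1$, so $\lambda_i\in\{c,c+1,c+2\}$.

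Next I would pin $\lambda_1$ into the range $4\leq\lambda_1\leq g-1$. The lower bound holds because $\lambda_1=2$ would make $\Lambda$ symmetric (forcing $c=2g$, not $2g-1$), and $\lambda_1=3$ is excluded by hypothesis. For the upper bound, $\lambda_1=g+1$ would make $\Lambda$ ordinary with conductor $\lambda_1=g+1$, which combined with $c=2g-1$ forces $g=2$ and $\lambda_1=3$; and $\lambda_1=g$ would leave $[1,g-2]$ as gaps, which under the pseudo-symmetric involution $i\leftrightarrow c-1-i$ forces all of $[g,2g-3]$ to be non-gaps, identifying $\Lambda$ with the single-interval semigroup $\Lambda_{ps_g}$ of Lemma~\ref{lemma:357}(1) and contradicting the multi-interval hypothesis.

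With $4\leq\lambda_1\leq g-1$ secured, for each $k\in\{0,1,2\}$ I would exhibit the decomposition $c+k=\lambda_1+(c+k-\lambda_1)$ and show the second summand is a non-gap. Since $\lambda_1<g+k$, the integer $c+k-\lambda_1$ differs from the distinguished gap $(c-1)/2=g-1$, so the pseudo-symmetric property applies: $c+k-\lambda_1$ is a non-gap precisely when $\lambda_1-1-k$ is a gap, which holds because $1\leq\lambda_1-1-k\leq\lambda_1-1$ for $\lambda_1\geq 4$ and $k\leq 2$. Hence each $c+k$ splits as a sum of two smaller positive non-gaps and cannot be a generator.

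The step I expect to be the main obstacle is the upper bound $\lambda_1\leq g-1$, specifically running the pseudo-symmetric pairing carefully enough to show that $\lambda_1=g$ genuinely forces $\Lambda=\Lambda_{ps_g}$ rather than merely being compatible with it. This is precisely where the ``more than one interval'' hypothesis is consumed; once it is in hand, the final decomposition step is a routine pseudo-symmetric calculation.
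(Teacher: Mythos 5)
Your proposal is correct, but it reaches the conclusion by a genuinely different route from the paper. Both arguments begin with the same reduction via Lemma~\ref{lemma:D(i)}: the only candidates for effective generators are $c$, $c+1$, $c+2$. From there the paper stays inside the $\#D(i)$ framework and, for each candidate $c+k$, exhibits a pair of \emph{gaps} summing to $c+k$ beyond the allowed count $2-k$ (for $c$ it uses the multi-interval hypothesis to find a non-gap $i\neq 0,c-2$ with $i+1\notin\Lambda$, giving the extra gap pair $\{i+1,c-1-i\}$; for $c+1$ and $c+2$ it uses the gap pairs $\{2,c-1\}$ and $\{3,c-1\}$, the latter being where $\lambda_1\neq 3$ enters). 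You instead first pin down $4\leq\lambda_1\leq g-1$ --- consuming the multi-interval hypothesis to exclude $\lambda_1=g$ via the classification of single-interval pseudo-symmetric semigroups in Lemma~\ref{lemma:357}(1) --- and then dispose of all three candidates uniformly by writing $c+k=\lambda_1+(c+k-\lambda_1)$ and checking via the pseudo-symmetric duality that $c+k-\lambda_1$ is a nonzero non-gap (which works precisely because $1\leq\lambda_1-1-k$ when $\lambda_1\geq 4$, and $c+k-\lambda_1\neq(c-1)/2$ when $\lambda_1\leq g-1$). Your route costs an extra preliminary step (the bound on $\lambda_1$, which leans on the uniqueness statement of Lemma~\ref{lemma:357}(1)) but buys a single clean decomposition covering all three cases and makes transparent why $\lambda_1=3$ is the exact obstruction for $c+2$; the paper's route avoids any bound on $\lambda_1$ and uses the multi-interval hypothesis only where it is needed, at the candidate $c$. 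Both are complete and correct.
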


\begin{proof}
By Lemma~\ref{lemma:D(i)},
the only cases in which $\lambda_k\geq c$ can be a generator correspond to the next three situations.
We used that $\lambda_k\geq c$ if and only if $k\geq c-g$.
\begin{itemize}
\item 
$\lambda_k=c$ if $\#D(k)=2$.
Since there exists more than one interval of non-gaps between $0$ and $c$,
there exists $i\in\Lambda$, $i\neq 0, c-2$ such that $i+1\not\in\Lambda$.
So $c-1-i\not\in\Lambda$ (pseudo-symmetric property)
and $i+1$, $c-i-1$ are different from $1,c-1$ and they also add up to $c$. Hence, $\#D(k)>2$, a contradiction.
\item
$\lambda_k=c+1$ if $\#D(k)=1$. This case is impossible since $2\neq c-1$, both 2 and $c-1$ are 
gaps, and they add up to $c+1$.
\item
$\lambda_k=c+2$ if $\#D(k)=0$.
This is impossible if $\lambda_1\neq 3$ because $3$ and $c-1$ are then gaps and so 
$D(k)\neq\emptyset$.
\end{itemize}
\end{proof}

As an example of pseudo-symmetric semigroup
with $\lambda_1\neq 3$ and 
with more than one interval of non-gaps between $0$ and the conductor 
we can take $\Lambda=\{0,4,7,8,9\}\cup[11,\infty)$.
In this case the generators are $4,7,9$ and none of them is effective.

A numerical semigroup is said to be \emph{irreducible} if it cannot
be expressed as an intersection of two numerical semigroups properly containing it. 
It was proven in \cite{RoBr:irreducible}
that irreducible semigroups are exactly symmetric and pseudo-symmetric semigroups.
Thus we have 
shown that the only non-leaves corresponding to irreducible numerical semigroups 
are those treated in 
Lemmas \ref{hyper}, \ref{lemma:357}, \ref{lemma:pseudo-symmetrics_multiplicity_3}.
Moreover the number of effective generators is small and the number of 
strong generators is even smaller. Therefore, the parts of the semigroup tree in a vicinity of 
an irreducible semigroup 
are not "bushy" and are easily described.

%The next lemma can be easily checked, so we omitted its proof.
%
%\begin{lemma}
%  Non-ordinary semigroups generated by an interval are leaves.
%\end{lemma}

\subsection{Arf semigroups}

A numerical semigroup $\Lambda$ with enumeration $\lambda$
is said to be Arf if $\lambda_i+\lambda_j-\lambda_k\in\Lambda$
for every $i,j,k\in{\mathbb N}_0$ with
$i\geq j\geq k$.
Hyperelliptic semigroups are an example of Arf semigroups.
In fact, it was shown in \cite{CaFaMu:arf}
that hyperelliptic semigroups are the only Arf symmetric semigroups.
A lot of work has been done related to Arf semigroups.
One can see, for instance,
\cite{BaDoFo,RoGaGaBr:arf,CaFaMu:arf}.

For the next lemma we use the fact that for an Arf numerical semigroup 
$\Lambda$, an element
$\lambda_i\neq 0,\lambda_1$ is a generator if and only if 
$\lambda_i-\lambda_1\not\in\Lambda$.

\begin{lemma}
\begin{enumerate}
\item
Non-hyperelliptic Arf numerical semigroups are bushes.
\item
Arf semigroups appear as descendants of semigroups with strong generators when removing one such generator.
\end{enumerate}
\end{lemma}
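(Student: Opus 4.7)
The plan is to exploit the Arf characterization stated just before the lemma: in an Arf semigroup, an element $\lambda_i\neq 0,\lambda_1$ is a generator iff $\lambda_i-\lambda_1\notin\Lambda$. Writing $m=\lambda_1$, this identifies the effective generators of $\Lambda$ (the generators that are at least $c$) with the gaps of $\Lambda$ lying in the window $[c-m,c-1]$ via $g\mapsto g+m$. Since $c-1$ is always a gap, at least one effective generator always exists, so for item (1) it is enough to produce a second gap in this window.

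For item (1), I would separate the ordinary and non-ordinary cases. If $\Lambda=\{0\}\cup[c,\infty)$ is ordinary then $m=c$ and non-hyperellipticity forces $c\geq 3$; the window is $[0,c-1]$, whose gaps are $1,\dots,c-1$, yielding at least $c-1\geq 2$ effective generators. If $\Lambda$ is non-ordinary then $m\geq 3$ (otherwise $\Lambda$ would be hyperelliptic) and $c>m$. Should $c-2$ already be a gap we are done, so assume $c-2\in\Lambda$. I would then invoke the Arf relation on the weakly decreasing triple $(c-2,\,c-m+1,\,c-m)$ of non-negative integers, whose ordering $c-2\geq c-m+1\geq c-m\geq 0$ holds precisely because $m\geq 3$ and $c\geq m$. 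Were all three in $\Lambda$, the Arf condition would force $(c-2)+(c-m+1)-(c-m)=c-1\in\Lambda$, contradicting that $c-1$ is a gap. Hence one of $c-m,c-m+1$ is a gap and produces a second effective generator.

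For item (2) let $\Lambda$ be a non-ordinary Arf semigroup with conductor $c$ and multiplicity $m$; its parent in the tree is $\Lambda'=\Lambda\cup\{c-1\}$, still non-ordinary and retaining the multiplicity $m$ because $c-1\geq m$. The generator removed when passing from $\Lambda'$ to $\Lambda$ is $c-1$, and I must show it is strong in $\Lambda'$. By Lemma~\ref{lemma:reasonforweakstrong} the effective generators of $\Lambda'\setminus\{c-1\}=\Lambda$ are either exactly the later effective generators of $\Lambda'$, or those together with $(c-1)+m$; strongness of $c-1$ is precisely the second alternative. Applying the Arf characterization inside $\Lambda$, the element $(c-1)+m$ is a generator of $\Lambda$ because $(c-1)+m-m=c-1\notin\Lambda$, and since $(c-1)+m\geq c$ it is an effective generator of $\Lambda$. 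On the other hand $(c-1)+m$ is not a generator of $\Lambda'$, since it equals the sum $(c-1)+m$ of two elements of $\Lambda'$, so it cannot be one of the later effective generators of $\Lambda'$; only the second alternative of the dichotomy is possible, and $c-1$ is strong.

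The main obstacle is the bookkeeping of boundary cases. Hyperellipticity must be excluded in item (1) because $m=2$ would collapse the triple argument, and the ordinary Arf semigroups have to be segregated in both items since there the multiplicity equals the conductor (so the window degenerates and, for item (2), strongness is undefined for ordinary parents). Once these edge cases are peeled off, a single application of the Arf relation at a well-chosen triple delivers both claims.
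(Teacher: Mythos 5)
Your proof is correct and rests on the same engine as the paper's, namely the characterization quoted just before the lemma (for Arf $\Lambda$, an element $\lambda_i\neq 0,\lambda_1$ is a generator iff $\lambda_i-\lambda_1\notin\Lambda$); for item (2) your argument is essentially the one the paper intends when it writes ``it follows from the remark previous to the Lemma'': $(c-1)+\lambda_1$ has $c-1\notin\Lambda$, so it is an effective generator of $\Lambda$ that is visibly not a minimal generator of the parent, and Lemma~\ref{lemma:reasonforweakstrong} then forces $c-1$ to be strong. For item (1) you take a genuinely different, though equally economical, route. The paper first proves the structural fact that an Arf semigroup has no two consecutive non-gaps below the conductor (by iterating the Arf closure on $i,i+1$), concludes that one of $c-2,c-3$ is a gap in addition to $c-1$, and uses $\lambda_1\geq 3$ to lift both gaps to effective generators. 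You instead phrase everything through the window $[c-\lambda_1,c-1]$ and, when $c-2\in\Lambda$, apply the Arf condition once to the triple $(c-2,\,c-m+1,\,c-m)$ to manufacture a second gap in that window. Both arguments are single-purpose uses of the Arf condition near the conductor; the paper's intermediate fact (no consecutive non-gaps below $c$) is slightly more reusable, while your window formulation makes the bijection ``gaps in $[c-m,c-1]$ $\leftrightarrow$ effective generators'' explicit, which is a clean way to see why non-hyperellipticity ($m\geq 3$) is exactly what is needed. Your explicit treatment of the ordinary case is something the paper skips but which its argument also covers.

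One small inaccuracy in your item (2): the parent $\Lambda\cup\{c-1\}$ of a non-ordinary Arf semigroup need not be non-ordinary. For example $\Lambda=\{0,N\}\cup[N+2,\infty)$ is Arf and non-ordinary, but its parent is $\{0\}\cup[N,\infty)$, which is ordinary, so ``strong generator'' is not defined for it under the paper's conventions. Your closing remark shows you are aware that ordinary parents must be set aside, but the sentence asserting the parent is ``still non-ordinary because $c-1\geq m$'' only controls the multiplicity, not the conductor, of the parent. This is a boundary case the paper's one-line proof also ignores, so it does not change the verdict, but the claim as written should be weakened to ``whenever the parent is non-ordinary.''
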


\begin{proof}
\begin{enumerate}
\item
For an Arf semigroup we know that if $i,i+1\in\Lambda$, then $i\ge c$. 
Indeed, for $j\geq i$, 
\begin{eqnarray*}
j&=&i+\overbrace{((i+1)-i)+((i+1)-i)+\dots+((i+1)-i)}^{(j-i)}\\
&=&\underbrace{\underbrace{\underbrace{i+((i+1)-i)}_{\in\Lambda}+((i+1)-i)}_{\in\Lambda}+\dots+((i+1)-i).}_{\in\Lambda}\\
\end{eqnarray*}
Thus we know that $c-1$ and either $c-2$ or $c-3$ are gaps.
Since $\Lambda$ is not hyperelliptic, $\lambda_1\geq 3$.
Thus, $c-1+\lambda_1$ and either $c-2+\lambda_1$
or $c-3+\lambda_1$ are generators.
\item
It follows from the remark previous to the Lemma. 
\end{enumerate}
\end{proof}

It was shown in \cite{RoGaGaBr:arf}
that at most two of the descendants of Arf semigroups are Arf.
For illustrating this,
notice that
$\{0,5,7\}\cup[9,\infty)$ 
has no Arf descendants;
$\{0,5\}\cup[7,\infty)$ 
has two Arf descendants: 
$\{0,5\}\cup[8,\infty)$
and 
$\{0,5,7\}\cup[9,\infty)$;
$\{0,5\}\cup[10,\infty)$
has one Arf descendant:
$\{0,5,10\}\cup[12,\infty)$.

\section{Infinite chains}
\label{sec:infinitechains}

%Let $\mathcal{L}$ be the set of all numerical semigroups and let $\mathcal{S}_g$
%be the set of numerical semigroups of genus $g$.

We say that an infinite sequence 
of numerical semigroups
$\Lambda_0={\mathbb N}_0,\Lambda_1,\Lambda_2,\dots$ 
is an \emph{infinite chain} if for each $i\geq 1$, $\Lambda_{i-1}$
can be obtained by adding to $\Lambda_i$ its Frobenius number.
Clearly, a numerical semigroup has infinitely many descendants in the semigroup
tree if and only if
it lies in an infinite chain.

For the proof of the next lemma we will use that a set of integers
$l_1,\dots,l_m$ generate a numerical semigroup if and only if they are coprime.

\begin{lemma}
  Given an infinite chain $(\Lambda_i)_{i\geq 0}$,
  $$
    \bigcap_{i\geq 0}\Lambda_i=d\cdot \Lambda
  $$ 
  for some integer $d>1$ and some numerical semigroup $\Lambda$.
\end{lemma}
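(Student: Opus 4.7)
The plan is to study the intersection $S := \bigcap_{i \geq 0} \Lambda_i$ as a submonoid of $({\mathbb N}_0,+)$ and then to apply the coprimality criterion for numerical semigroups (recalled immediately before the lemma) twice: first to force a common divisor $d > 1$ on all nonzero elements of $S$, and then to identify the rescaled set $\{s/d : s \in S\}$ as an honest numerical semigroup.

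The preparatory step is to show that ${\mathbb N}_0 \setminus S$ is infinite. When $\Lambda_i$ is produced from $\Lambda_{i-1}$ by removing an effective generator $g \geq c(\Lambda_{i-1}) = F(\Lambda_{i-1})+1$, every integer strictly above $g$ that lay in $\Lambda_{i-1}$ remains in $\Lambda_i$, so the removed value $g$ is precisely $F(\Lambda_i)$. Consequently the Frobenius numbers $F(\Lambda_1) < F(\Lambda_2) < \cdots$ strictly increase along the chain, and each $F(\Lambda_i)$ lies outside every $\Lambda_j$ with $j \geq i$, hence outside $S$. This produces infinitely many elements of ${\mathbb N}_0 \setminus S$.

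Now, if some finite subset $\{a_1,\dots,a_k\} \subseteq S$ satisfied $\gcd(a_1,\dots,a_k) = 1$, the coprimality criterion would make $\langle a_1,\dots,a_k \rangle$ a numerical semigroup contained in $S$, forcing ${\mathbb N}_0 \setminus S$ to be finite and contradicting the previous paragraph. Hence every finite subset of $S \setminus \{0\}$ has common divisor strictly larger than $1$, so $d := \gcd(S \setminus \{0\}) > 1$ is well defined (assuming $S \neq \{0\}$; the exceptional ordinary-chain case, where multiplicities grow unboundedly and $S$ collapses to $\{0\}$, must be acknowledged separately). Setting $\Lambda := \{s/d : s \in S\}$ yields a submonoid of ${\mathbb N}_0$ whose nonzero elements have gcd equal to $1$ by minimality of $d$; the coprimality criterion applied in the converse direction then produces a numerical semigroup sitting inside $\Lambda$, so $\Lambda$ has finite complement in ${\mathbb N}_0$ and is itself a numerical semigroup. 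The equality $S = d \cdot \Lambda$ is immediate from the construction.

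The only delicate point in the argument is the infinite-complement step, i.e.\ verifying that the Frobenius numbers genuinely give distinct gaps of $S$ and keeping track of which element plays the role of $F(\Lambda_i)$ at each stage; once this is established, both invocations of the coprimality criterion slot in essentially for free.
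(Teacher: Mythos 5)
Your proof takes essentially the same route as the paper's: both argue that a coprime finite subset of the intersection would generate a numerical semigroup contained in every $\Lambda_i$, which is incompatible with the chain being infinite (you derive the incompatibility from the strictly increasing Frobenius numbers producing infinitely many gaps of the intersection, the paper from the genus of the $\Lambda_i$ being unbounded), and then both divide out the resulting common divisor $d>1$. Your explicit flagging of the ordinary chain, whose intersection is $\{0\}$ and hence not of the form $d\cdot\Lambda$ for a numerical semigroup $\Lambda$, is a genuine edge case that the paper's own proof silently skips, so that caveat is a point in your favour rather than a defect.
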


\begin{proof}
The intersection $\cap_{i\geq 0}\Lambda_i$
satisfies $0\in \cap_{i\geq 0}\Lambda_i$
and $x+y\in \cap_{i\geq 0}\Lambda_i$ for all $x,y\in \cap_{i\geq 0}\Lambda_i$.
Furthermore, all elements in $\cap_{i\geq 0}\Lambda_i$
must be divisible by an integer $d>1$. Indeed, otherwise
we could find a finite set of coprime elements 
which would generate a numerical semigroup,
and this numerical semigroup should be a subset of $\cap_{i\geq 0}\Lambda_i$.
Then the infinite chain would not contain any semigroup with 
genus larger than that of this semigroup, giving a contradiction.
Let $d$ be the greatest of the common divisors of $\cap_{i\geq 0}\Lambda_i$.
Then $\frac{1}{d}\left(\cap_{i\geq 0}\Lambda_i\right)$ must be a numerical semigroup.
\end{proof}

\begin{lemma}
  Given an integer $d>1$ and a numerical semigroup $\Lambda$
  the infinite chain obtained by deleting repetitions in the sequence
  $\Lambda_j=d\cdot \Lambda\cup\{l\in{\mathbb N}:l\geq j\}$ has intersection $d\cdot \Lambda$.
\end{lemma}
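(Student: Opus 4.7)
The plan is to verify three things: that each $\Lambda_j$ in the stated family is a numerical semigroup, that after removing repetitions consecutive terms differ exactly by a Frobenius element (so the sequence is genuinely a parent chain in the tree), and that the intersection collapses to $d\cdot\Lambda$.

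First I would check that $\Lambda_j=d\cdot\Lambda\cup\{l\in\mathbb{N}:l\geq j\}$ is a numerical semigroup: it contains $0$, its complement in $\mathbb{N}_0$ is a subset of the finite set of integers below $j$ that are not in $d\cdot\Lambda$, and closure under addition follows case-by-case depending on whether each summand lies in $d\cdot\Lambda$ or in the tail $\{l\geq j\}$ (the sum lies in $\{l\geq j\}$ as soon as at least one summand does, and in $d\cdot\Lambda$ when both summands are).

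Next I would compare $\Lambda_{j-1}$ and $\Lambda_j$. Clearly $\Lambda_j\subseteq\Lambda_{j-1}$, and $\Lambda_{j-1}\setminus\Lambda_j\subseteq\{j-1\}$. So there are exactly two cases: if $j-1\in d\cdot\Lambda$ then $\Lambda_{j-1}=\Lambda_j$, a repetition to be deleted; if $j-1\notin d\cdot\Lambda$ then $\Lambda_{j-1}=\Lambda_j\cup\{j-1\}$. In the second case, every $l\geq j$ lies in $\Lambda_j$, so $j-1$ is the largest gap of $\Lambda_j$, i.e.\ its Frobenius number. Thus after deleting repetitions, each consecutive passage $\Lambda_j\to\Lambda_{j-1}$ is precisely the parent operation in the semigroup tree. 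To confirm the chain is actually infinite, I would note that since $d>1$ the set $\mathbb{N}\setminus d\cdot\Lambda$ contains every positive integer not divisible by $d$, hence is infinite; so infinitely many indices $j$ satisfy $j-1\notin d\cdot\Lambda$, yielding infinitely many distinct members of the chain, with $\Lambda_1=\mathbb{N}_0$ as the trivial semigroup at the top.

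Finally I would compute $\bigcap_{j\geq 0}\Lambda_j$. The inclusion $d\cdot\Lambda\subseteq\bigcap_{j\geq 0}\Lambda_j$ is immediate from the definition. For the reverse, take any $x\in\bigcap_{j\geq 0}\Lambda_j$ and specialize to $j=x+1$: membership in $\Lambda_{x+1}=d\cdot\Lambda\cup\{l\geq x+1\}$ forces $x\in d\cdot\Lambda$ since $x<x+1$. Hence equality.

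This proof is essentially bookkeeping and has no hard step; the only point requiring mild care is making explicit that when $j-1\notin d\cdot\Lambda$ the integer $j-1$ is in fact the Frobenius number of $\Lambda_j$ (so that the construction really produces a chain in the semigroup tree rather than an arbitrary descending sequence of numerical semigroups), which is where the tail structure $\{l\geq j\}$ is doing the work.
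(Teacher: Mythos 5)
Your proof is correct and complete; the paper actually states this lemma without any proof at all, so there is nothing to compare against. Your verification that $j-1$ is the Frobenius number of $\Lambda_j$ whenever $j-1\notin d\cdot\Lambda$ (so the deleted-repetition sequence really is a chain in the sense of the paper's definition), together with the specialization $x\in\Lambda_{x+1}\Rightarrow x\in d\cdot\Lambda$ for the intersection, supplies exactly the bookkeeping the authors left implicit.
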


Consequently, if we denote by ${\mathbb S}$ the set of all numerical semigroups, there is a bijection
$${\mathbb S}\times{\mathbb N}_{\ge 2} \leftrightarrow \{\mbox{infinite chains}\}$$

In the next theorem we show that the greatest common divisor of the first elements
of a numerical semigroup determine whether the numerical semigroup
has infinite number of descendants. Notice that since $\lambda_{c-g}=c$,
the set $\lambda_0,\dots,\lambda_{c-g-1}$ is the set of non-gaps smaller than the conductor.

\begin{theorem}
\label{thm:infinitechains}
  Let $\Lambda$ 
  be a numerical semigroup with enumeration $\lambda$, genus $g$, and conductor $c$, 
and let 
  $d$ be the greatest common divisor of $\lambda_0,\dots,\lambda_{c-g-1}$.
  Then,
  \begin{enumerate}
  \item 
    $\Lambda$ lies in an infinite chain if and only if $d\neq 1$.
  \item 
    If $d=1$ then the descendant of $\Lambda$ with largest genus
    is the numerical semigroup generated by  
    $\lambda_0,\dots,\lambda_{c-g-1}$. 
  \item 
    If $d\neq 1$ then 
    $\Lambda$ lies in infinitely many infinite chains if and only if $d$ is not prime.
  \item If $d$ is a prime then 
    the number of infinite chains in which $\Lambda$ lies is
    the number of descendants of 
    $\{\frac{\lambda_0}{d},\frac{\lambda_{1}}{d},\dots,\frac{\lambda_{c-g-1}}{d}\}\cup
    \{l\in{\mathbb N}_0: l \geq \lceil\frac{c}{d}\rceil\}$. 
  \end{enumerate}
\end{theorem}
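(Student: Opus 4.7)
The plan is to exploit the bijection of the preceding lemma between infinite chains and pairs $(d',\Lambda^*)$ with $d'\geq 2$ and $\Lambda^*$ a numerical semigroup, together with the observation for Part 1 that descent in the tree only removes elements $\geq$ the current conductor, which is always $\geq c$. Hence every descendant of $\Lambda$ preserves the non-gaps of $\Lambda$ below $c$ and must contain the additive closure $\Lambda':=\langle\lambda_0,\ldots,\lambda_{c-g-1}\rangle$. If $d=1$, the generators of $\Lambda'$ are coprime, so $\Lambda'$ is already a numerical semigroup, $\Lambda\setminus\Lambda'$ is finite, and $\Lambda$ has only finitely many descendants. If $d>1$, the set $\Lambda^*:=\{k\in\mathbb{N}_0:dk\in\Lambda\}$ is a numerical semigroup with $\Lambda=d\Lambda^*\cup[c,\infty)$, placing $\Lambda$ at position $j=c$ in the infinite chain associated to $(d,\Lambda^*)$.

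Part 2 uses the same containment: $\Lambda''\supseteq\Lambda'$ for every descendant $\Lambda''$ bounds the genus by $g(\Lambda')$, with equality only if $\Lambda''=\Lambda'$. To reach $\Lambda'$, I would iteratively delete the smallest element $x_i$ of $\Lambda^{(i)}\setminus\Lambda'$: minimality forces $x_i$ to be a generator, because any decomposition $x_i=a+b$ with $0<a,b<x_i$ in $\Lambda^{(i)}$ would place a summand in $\Lambda^{(i)}\setminus\Lambda'$ below $x_i$ (since $\Lambda'$ is a semigroup), contradicting minimality; and effectiveness follows because the conductor of $\Lambda^{(i)}$---equal to $c$ when $i=0$ and to $x_{i-1}+1$ otherwise---does not exceed $x_i$.

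For Parts 3 and 4, I analyze when $\Lambda$ lies in the chain of $(d',\Lambda^*)$: solving $\Lambda=d'\Lambda^*\cup[j,\infty)$ with $j\geq c$ forces $d'\mid\lambda_i$ for every non-gap $\lambda_i<c$, hence $d'\mid d$, and yields the sandwich $F_{d'}\subseteq\Lambda^*\subseteq M_{d'}$, where $F_{d'}:=\{\lambda_i/d':\lambda_i\in\Lambda,\lambda_i<c\}$ and $M_{d'}:=F_{d'}\cup[\lceil c/d'\rceil,\infty)$. Therefore, for each admissible $d'$, the chains through $\Lambda$ correspond bijectively to the numerical semigroups $\Lambda^*$ in that sandwich---equivalently, to the descendants of $M_{d'}$ in the semigroup tree whose defining sequence of removals happens only at positions $\geq\lceil c/d'\rceil$. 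The arithmetic punchline is $\gcd(F_{d'})=d/d'$ (by rescaling the definition of $d$): a proper divisor $d'\mid d$ gives $\gcd\geq 2$, so Part 1 applied to $M_{d'}$ yields infinitely many such $\Lambda^*$ and hence infinitely many chains, while $d'=d$ gives $\gcd=1$ and only finitely many. Part 3 follows (infinitely many chains iff $d$ is composite), and Part 4 follows by specializing to prime $d$, where only $d'=d$ contributes and the total count equals the number of descendants of $M_d$.

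The most delicate step I expect is this final identification in Part 4: verifying that the numerical semigroups $\Lambda^*$ sandwiched between $F_d$ and $M_d$ coincide with the descendants of $M_d$ as the statement intends, particularly when $c(M_d)<\lceil c/d\rceil$, so that some tree-descendants of $M_d$ would destroy $F_d$ and must be excluded from the count.
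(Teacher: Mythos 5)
Your overall route is the same as the paper's: both arguments rest on the fact that passing to a descendant preserves the non-gaps below $c$, on the bijection between infinite chains and pairs $(d',\Lambda^*)$ from the preceding lemmas, and on the resulting characterization of the intersections of chains through $\Lambda$ as the semigroups $d'\Lambda^*$ with $F_{d'}\subseteq\Lambda^*\subseteq M_{d'}$. One place where you are more complete than the paper is Part 2: the paper only notes that every descendant contains $\Lambda'=\langle\lambda_0,\dots,\lambda_{c-g-1}\rangle$ and says the claim ``follows from the proof of the previous statement'', whereas you actually check that $\Lambda'$ is reached in the tree by removing at each step the smallest element of $\Lambda^{(i)}\setminus\Lambda'$ and verifying that it is an effective generator; this verification is needed and your argument for it is correct.

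The ``delicate step'' you flag in Part 4 is a genuine problem, and the paper does not resolve it: its proof only shows that every chain through $\Lambda$ produces a $\tilde\Lambda$ with $\tilde\Lambda\cup[\lceil c/d\rceil,\infty)=M_d$, hence a descendant of $M_d$, and never addresses whether every descendant of $M_d$ arises this way. It does not. For $\Lambda=\{0,4,6\}\cup[8,\infty)$ one has $d=2$ and $M_2=\langle 2,3\rangle$; the intersection of any chain through $\Lambda$ is forced to be $2\langle 2,3\rangle$, so $\Lambda$ lies in exactly one infinite chain, while $\langle 2,3\rangle$ has two children and infinitely many descendants. The correct count is the number of numerical semigroups $T$ with $F_d\subseteq T\subseteq M_d$, equivalently $M_d$ together with those descendants of $M_d$ obtained by deleting only elements $\geq\lceil c/d\rceil$; your write-up should state and prove that corrected count rather than attempt the identification you are (rightly) worried about. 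A related small slip occurs in your Part 3: you obtain ``infinitely many sandwiched $\Lambda^*$'' by applying Part 1 to $M_{d'}$, which conflates descendants of $M_{d'}$ with sandwiched semigroups --- the very conflation you criticize in Part 4. It is cleaner to exhibit the sandwiched semigroups $\langle F_{d'}\rangle\cup[j,\infty)$ for $j\geq\lceil c/d'\rceil$ directly, which is in substance what the paper's $\Lambda_{i,j}$ construction does.
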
 

\begin{proof}
\begin{enumerate}
\item
If $d=1$ then
$\lambda_0,\dots,\lambda_{c-g-1}$ 
generate a numerical semigroup $\Lambda'$
and each descendant of $\Lambda$ must contain $\Lambda'$.
Thus, the maximum of the genus of the descendants is the genus of $\Lambda'$ which is finite.
On the other hand, if $d\neq 1$ then
$$\lambda_0=d\tilde\lambda_0,\dots,\lambda_{c-g-1}=d\tilde\lambda_{c-g-1}$$ with 
$\tilde\lambda_0,\dots,\tilde\lambda_{c-g-1}$ coprime.
Let $\tilde\Lambda$ be the numerical semigroup generated by
$\tilde\lambda_0,\dots,\tilde\lambda_{c-g-1}$.
Consider the sequence of semigroups $$\Lambda_i=d\cdot\tilde\Lambda\cup\{l\in {\mathbb N}_0:l\geq i\}.$$ By deleting repetitions we obtain an 
infinite chain that contains $\Lambda$.
\item It follows from the proof of the previous statement.
\item If $d$ is not prime then $d=d_1d_2$ for some $d_1,d_2>1$ and, as before, 
$$
  \lambda_0=d_1d_2\tilde\lambda_0,\dots,\lambda_{c-g-1}=d_1d_2\tilde\lambda_{c-g-1}
$$ 
with $\tilde\lambda_0,\dots,\tilde\lambda_{c-g-1}$ coprime.
Let $\tilde\Lambda$ be the numerical semigroup generated by
$\tilde\lambda_0,\dots,\tilde\lambda_{c-g-1}$.
For each $i\geq 0$
and each $j\geq 0$
define $$\Lambda_{i,j}=d_1d_2\tilde\Lambda\cup \{d_1l\in{\mathbb N}_0:l\geq i\}\cup\{l\in{\mathbb N}_0:l\geq j\}.$$
For each fixed $i\geq \lceil\frac{c}{d_1}\rceil$, by deleting repetitions in the sequence 
$\left(\Lambda_{i,j}\right)_{j\geq 0}$ we obtain an infinite chain.
Moreover every such chain contains $\Lambda$, as $\Lambda=\Lambda_{i,c}$, 
if $i\ge \lceil\frac{c}{d_1}\rceil$. 
For each $i\geq \lceil\frac{c}{d_1}\rceil$
 this chain is different.
Thus we get infinitely many infinite chains.
The complete result in this statement follows from statement 4.
\item 
Suppose that an infinite chain $(\Lambda_i)_{i\geq 0}$ contains $\Lambda$.
It must satisfy 
$\cap_{i\geq 0}\Lambda_i=d\cdot\tilde{\Lambda}$ for a unique
numerical semigroup 
$\tilde{\Lambda}$
such that
\begin{itemize}
\item 
$d\tilde{\lambda}_0=\lambda_0,\dots
d\tilde{\lambda}_{c-g-1}={\lambda}_{c-g-1}$,
\item
$d\tilde{\lambda}_{c-g}\geq c$, since $d\tilde{\Lambda}\subseteq \Lambda$.
\end{itemize}
Thus, $\tilde{\Lambda}$ is a descendant of
$\{\frac{\lambda_0}{d},\frac{\lambda_{1}}{d},\dots,\frac{\lambda_{c-g-1}}{d}\}\cup
\{l\in{\mathbb N}_0: l \geq \lceil\frac{c}{d}\rceil\}$. 
\end{enumerate}
\end{proof}

\begin{lemma}
Let $\Lambda$ be a numerical semigroup with enumeration $\lambda$, genus $g$, conductor $c$, and
$\gcd(\lambda_0,\dots,\lambda_{c-g-1})=d>1$ lying in an infinite chain. Then 
\begin{enumerate}
\item All non-gaps between $c$ and $c+\lambda_1-1$ that 
are not multiples of $d$ are generators. Thus $\Lambda$ has at least $\lambda_1-\frac{\lambda_1}{d}$ effective generators.
\item If there are at least two non-gaps between $0$ and $c$, 
then all non-gaps between $c$ and $c+d-1$ that are not multiples of $d$ 
are strong generators. 
Thus $\Lambda$ has at least $d-1$ strong generators.
\item If there is just one non-gap between $0$ and $c$,
then there is at least one strong generator.
\end{enumerate}
\end{lemma}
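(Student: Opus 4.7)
I lean on Theorem~\ref{thm:infinitechains}, which says that $d>1$ is equivalent to every non-gap strictly below $c$ being a multiple of $d$, and on Lemma~\ref{strong-characterize} to decide strongness. For part 1, take $x\in[c,c+\lambda_1-1]$ with $d\nmid x$ and suppose $x=a+b$ for positive non-gaps $a,b$. Either both $a,b<c$, forcing $d\mid x$, or one of them is $\geq c$, forcing $x\geq c+\lambda_1$; both are impossible, so $x$ is a generator. Since $d\mid\lambda_1$, precisely $\lambda_1/d$ of the $\lambda_1$ non-gaps in $[c,c+\lambda_1-1]$ are multiples of $d$, giving the counted $\lambda_1-\lambda_1/d$ effective generators.

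For part 2, fix $x\in[c,c+d-1]$ with $d\nmid x$. Since $d\mid 2\lambda_1$ but $d\nmid x$, we have $x\neq 2\lambda_1$, so Lemma~\ref{strong-characterize} reduces strongness to $\nu_{k+\lambda_1}=4$, i.e.\ to showing that the only decompositions $x+\lambda_1=a+b$ with $a,b\in\Lambda$ are the two trivial pairs $\{0,x+\lambda_1\}$ and $\{\lambda_1,x\}$. I split by the position of $a,b$ relative to $c$, taking $a\leq b$: (i) $a,b<c$ contradicts $d\nmid x+\lambda_1$; (ii) $a<c\leq b$ gives $a\leq x+\lambda_1-c\leq d+\lambda_1-1$, and since every positive non-gap $<c$ other than $\lambda_1$ is $\geq\lambda_2\geq\lambda_1+d$, we must have $a=\lambda_1$; (iii) $a\geq c$ would force $c\leq d+\lambda_1-1$, but the hypothesis $\lambda_2<c$ combined with $\lambda_2\geq\lambda_1+d$ gives $c\geq\lambda_1+d+1$, ruling this out. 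Counting the $d-1$ non-multiples of $d$ in $[c,c+d-1]$ yields the claim.

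For part 3, $\lambda_2=c$ and hence $d=\lambda_1$. I single out the smallest non-multiple of $\lambda_1$ in $[c,c+\lambda_1-1]$: set $x_0:=c$ if $\lambda_1\nmid c$, and $x_0:=c+1$ otherwise (the latter case giving $c\geq 2\lambda_1$). In either case $x_0+\lambda_1<2c$, and $x_0$ is a generator by part 1. Case (i) above now needs $x_0=\lambda_1$, which is impossible; case (ii) forces $a=\lambda_1$ since $\lambda_1$ is the only positive non-gap $<c$; case (iii) is killed by $x_0+\lambda_1<2c$. Hence $\nu_{k_0+\lambda_1}=4$ and $x_0$ is strong.

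The main obstacle is case (iii) of part 2: extracting from the qualitative hypothesis $\lambda_2<c$ the numerical inequality $c\geq\lambda_1+d+1$. The observation that $\lambda_2\geq\lambda_1+d$, simply because $\lambda_2$ is the next multiple of $d$ in $\Lambda$ after $\lambda_1$, is what makes this work. In part 3 this bound is unavailable, which is why the conclusion weakens to existence and why the candidate $x_0$ must be chosen close to $c$.
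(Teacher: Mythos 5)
Your proof is correct and follows essentially the same route as the paper: part 1 is the identical argument (a decomposition with both summands below $c$ forces divisibility by $d$, one summand at or above $c$ forces the sum past $c+\lambda_1-1$), and part 2 rests on the same key inequality $\lambda_2\geq\lambda_1+d$ (the paper phrases it as $\lambda_2+c>\lambda_1+c+d-1$) together with counting the decompositions of $\lambda_k+\lambda_1$ in the spirit of Lemma~\ref{strong-characterize}. The only notable divergence is part 3, which the paper dispatches with the one-line remark that ``at least $c$ or $c+1$ is strong''; your explicit choice of $x_0$ and the verification that $x_0+\lambda_1<2c$ supply exactly the details that remark leaves to the reader.
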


\begin{proof}
\begin{enumerate}
\item If $c\leq \lambda_k\leq c+\lambda_1-1$,
$\lambda_k$ is not a multiple of $d$,
and there exist $0<i<j$ such that $\lambda_i+\lambda_j=\lambda_k$
then it must be $\lambda_j<c$; otherwise
$\lambda_k=\lambda_i+\lambda_j\geq \lambda_1+c$. But if $\lambda_i,\lambda_j<c$ then $\lambda_k=\lambda_i+\lambda_j$ 
is a multiple of~$d$, since $\lambda_i$ and $\lambda_j$ are, a contradiction.

\item If $c\leq \lambda_k\leq c+d-1$,
$\lambda_k$ is not a multiple of $d$, 
and there exist $1<i<j$ such that $\lambda_i+\lambda_j=\lambda_1+\lambda_k$
then it must be $\lambda_i<c$. Otherwise
$\lambda_2+c>\lambda_1+c+d-1\geq\lambda_1+\lambda_k=\lambda_i+\lambda_j\geq 2c$, a contradiction since $\lambda_2\leq c$. 
But then $\lambda_i$ is a multiple of $d$ and $\lambda_i+\lambda_j=\lambda_1+\lambda_k$ means that $\lambda_j\equiv\lambda_k\mbox{ mod }d$.
By hypothesis $\lambda_k$ is not a multiple of $d$ and so $\lambda_j$ is 
not a multiple of $d$ either and consequently $\lambda_j\geq c$.
But then $\lambda_k-\lambda_j\leq c+d-1-c=d-1$, so $\lambda_j=\lambda_k$
and $\lambda_i=\lambda_1+\lambda_k-\lambda_j=\lambda_1$, a contradiction.
\item For the last statement notice that at least $c$ or $c+1$ is strong. 

\end{enumerate}
\end{proof}

Notice that in the second statement of the previous lemma 
the requirement that
there are at least two non-gaps between $0$ and $c$
is necessary. As a counterexample consider the semigroup $\{0,8\}\cup[10,\infty)$.
In this case, 
$d=\lambda_1=8$ and all non-gaps between $10$ and $10+\lambda_1-1=17$ are generators except for $16$ which is a multiple of $d$. This is a 
consequence 
of the first statement.
The second statement fails since $12$ is between $c$ and $c+d-1$ and it is not a multiple of $d$, but $12+8=10+10$.

\section{Future directions for solving the conjecture about the Fibonacci-like behavior of $n_g$}

\label{sec:future}
In this section we outline some further thoughts on strong/weak generators and how they might help to solve the Fibonacci conjecture.
First of all, computational evidence suggests that as $g$ grows, the portion of strong generators among all effective generators 
becomes smaller. Namely, the following is conjectured.
\begin{conject}
  Let $S_g$ be the number of all strong generators in all numerical semigroups of genus $g$ and let
  $W_g$ be the number of all weak generators in all numerical semigroups of genus $g$. We conjecture that
  $$
    \lim_{g\to\infty}\frac{S_g}{W_g}=0.
  $$
  \label{ratio-to-0}
\end{conject}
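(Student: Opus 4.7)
The plan is to proceed in three phases: reformulate the counting, characterize the children arising from strong removals, and then estimate their density.

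\textbf{Reformulation.} Each effective generator of a genus-$g$ semigroup corresponds bijectively to a child of genus $g+1$, with the inverse map being addition of the Frobenius number. Hence $S_g+W_g=n_{g+1}$, and Conjecture~\ref{ratio-to-0} is equivalent to $S_g/n_{g+1}\to 0$, i.e.\ to showing that a vanishing proportion of genus-$(g+1)$ semigroups arise from removing a strong generator of their parent.

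\textbf{Characterization.} Let $\Lambda'$ be of genus $g+1$ with Frobenius number $F$, multiplicity $\lambda_1$, and parent $\Lambda=\Lambda'\cup\{F\}$. By Lemma~\ref{lemma:reasonforweakstrong}, the removed generator $F$ was strong in $\Lambda$ iff $F+\lambda_1$ is an effective generator of $\Lambda'$. Since $F+\lambda_1\geq c(\Lambda')$ is automatic, this reduces to $F+\lambda_1$ being a generator of $\Lambda'$. Because $F$ itself is a gap of $\Lambda'$, the trivial decomposition $F+\lambda_1=\lambda_1+F$ is vacuous, and the condition rewrites as: for every non-gap $a$ of $\Lambda'$ with $\lambda_1<a<F$, the integer $F+\lambda_1-a$ is a gap of $\Lambda'$.

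\textbf{Estimation.} The map $a\mapsto F+\lambda_1-a$ is an involution on $(\lambda_1,F)\cap\mathbb{Z}$, so the condition forces the non-gaps in $(\lambda_1,F)$ to number at most half the integers there, with the fixed point $(F+\lambda_1)/2$ (when it is an integer) necessarily a gap. A routine count turns this into the necessary inequality $c+\lambda_1\leq 2(g+1)+2$, already placing $\Lambda'$ in a sparse regime --- for instance, symmetric $\Lambda'$ must be hyperelliptic and pseudo-symmetric $\Lambda'$ must have $\lambda_1\leq 3$, thereby recovering the classification of strong-bearing irreducible semigroups in Section~\ref{sec:knownclasses}. I would then stratify by multiplicity $m=\lambda_1$ and attempt to show that, for each fixed $m$, the number of genus-$(g+1)$ semigroups satisfying the full involution-symmetric condition grows strictly slower than $n_{g+1}$; summing over $m$ and invoking the rarity of semigroups whose multiplicity is large relative to the genus should complete the argument.

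\textbf{Main obstacle.} The estimation step is the heart of the difficulty. The involution-symmetry is strikingly reminiscent of the property defining symmetric semigroups, so that strong-removal children form a kind of ``shifted symmetric'' family, which morally should be exponentially rarer than the full count $n_{g+1}$. However, making this rigorous requires asymptotic control of $n_{g+1}$ stratified by local structure near the conductor --- precisely the delicate counting that underlies the Fibonacci conjecture itself. A realistic proof may therefore have to proceed by establishing a recurrence $S_g\leq C\rho^g$ with $\rho$ strictly less than the conjectured growth rate of $n_g$, via a monovariant propagated along the semigroup tree, rather than by direct enumeration of the set described above.
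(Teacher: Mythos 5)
The statement you are asked about is a \emph{conjecture}: the paper offers no proof of it, only computational evidence, so there is no argument of the authors to compare yours against. Judged on its own terms, your proposal contains two correct and genuinely useful reductions but does not prove the statement. The reformulation is essentially right: effective generators of genus-$g$ semigroups biject with their genus-$(g+1)$ children, so $S_g+W_g$ equals $n_{g+1}$ up to the $g+1$ children of the ordinary semigroup (for which weak/strong is not defined in the paper); this lower-order correction does not affect the equivalence $S_g/W_g\to 0 \iff S_g/n_{g+1}\to 0$. The characterization is also correct: by Lemma~\ref{lemma:reasonforweakstrong} and the definition of strong, the removed Frobenius number $F$ was strong in the parent iff $F+\lambda_1$ is a generator of the child $\Lambda'$, which amounts to the ``shifted symmetry'' condition that $a\mapsto F+\lambda_1-a$ sends non-gaps of $(\lambda_1,F)$ to gaps, and your derived necessary inequality $c+\lambda_1\leq 2(g+1)+2$ and its consequences for symmetric and pseudo-symmetric $\Lambda'$ check out against Section~\ref{sec:knownclasses}.

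The genuine gap is the estimation step, which is the entire content of the conjecture and which you explicitly leave as an ``attempt.'' The necessary inequality $c+\lambda_1\leq 2(g+1)+2$ does not by itself show that strong-removal children form a vanishing fraction of all genus-$(g+1)$ semigroups: one would need to know that semigroups with conductor this close to $2g$ relative to their multiplicity are asymptotically negligible in $n_{g+1}$, and no bound of that kind is available in the paper (the only tools are $2F_g\leq n_g\leq 1+3\cdot 2^{g-3}$, whose upper and lower bounds have different exponential rates, so one cannot even conclude that a subfamily growing like a smaller exponential is negligible). Your proposed alternatives --- stratifying by multiplicity, or propagating a recurrence $S_g\leq C\rho^g$ along the tree --- are stated as hopes rather than carried out, and as you yourself note they require precisely the asymptotic control of $n_g$ that constitutes the open Fibonacci problem. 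So the proposal is a plausible reduction of the conjecture to a (still open) counting problem, not a proof.
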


Notice that by Lemma~\ref{lemma:reasonforweakstrong},
if the number of effective generators 
(and so the number of descendants) of a semigroup is $k$
and all $k$ effective generators are weak then the number of 
effective generators (and so the number of descendants) of its descendants
is respectively $0,1,\dots,k-1$.
In \cite{Bras:ngbounds} 
the tree $A$ represented in Figure~\ref{fig:arbreA}
was recursively defined as follows:
Its root is labeled as $1$ and it has a single descendant which is labeled as
$2$. This descendant in turn has two descendants labeled as $1$ and $3$.
At each level $g$, the number of descendants of a node is equal to its label.
From level $g=2$ on, if the label of a node is $k$ then the labels of its descendants
are $0,\dots,k-1$ except for the node with label $k=g+1$, whose descendants have
labels $0,\dots,k-3$,$k-1$,$k+1$.

Because of Lemma~\ref{lemma:reasonforweakstrong}
and because of the particular structure of ordinary semigroups,
the semigroup tree in Figure~\ref{fig:tree}
contains $A$ as a subtree.

Define $A_0=\{1\}$, $A_1=\{2\}$
and for $g\geq 2$
define $A_g$ as 
\begin{eqnarray*}
  A_g&=&\{g+1\}\cup\left(\bigcup_{m\in A_{g-1}}\{0,1,\dots,m-1\}\right)\setminus\{g-2\}.\\
\end{eqnarray*}
The tree $A$
has $A_g$ as the nodes at distance $g$ from its root.
Thus,
$|A_g|\leq n_g$.
It was shown in \cite{Bras:ngbounds} that $|A_g|=2F_g$, 
where $F_i$ denotes the $i$-th Fibonacci number.
From this the lower bound $n_g\geq 2F_{g}$ was deduced.

\begin{figure}
\compatiblegastexun
\setvertexdiam{6}
%\letvertex g2=(370,80)
%\letvertex g3=(370,70)
%\letvertex g4=(370,40)
%\letvertex g5=(370,20)
%\letvertex g6=(370,0)
%\letvertex g7=(370,0)
\letvertex s0=(15,120)
\letvertex s1=(15,100)
\letvertex s21=(0,80)
\letvertex s22=(30,80)
\letvertex aux21=(5,70)
\letvertex aux22=(40,70)
\letvertex s31=(0,60)
\letvertex s32=(20,60)
\letvertex s33=(30,60)
\letvertex s34=(40,60)
\letvertex s35=(50,60)
\letvertex s41=(30,40)
\letvertex s42=(40,40)
\letvertex s43=(50,40)
\letvertex s44=(60,40)
\letvertex s45=(70,40)
\letvertex s46=(80,40)
\letvertex s47=(100,40)
\letvertex s51=(40,20)
\letvertex s52=(60,20)
\letvertex s53=(70,20)
\letvertex s54=(80,20)
\letvertex s55=(90,20)
\letvertex s56=(100,20)
\letvertex s57=(110,20)
\letvertex s58=(120,20)
\letvertex s59=(130,20)
\letvertex s510=(140,20)
\letvertex s511=(170,20)
\letvertex s61=(70,0)
\letvertex s62=(80,0)
\letvertex s63=(90,0)
\letvertex s64=(100,0)
\letvertex s65=(110,0)
\letvertex s66=(120,0)
\letvertex s67=(130,0)
\letvertex s68=(140,0)
\letvertex s69=(150,0)
\letvertex s610=(160,0)
\letvertex s611=(170,0)
\letvertex s612=(180,0)
\letvertex s613=(190,0)
\letvertex s614=(200,0)
\letvertex s615=(210,0)
\letvertex s616=(220,0)
\letvertex s617=(250,0)
%
%\letvertex s71=(70,0)
%\letvertex s72=(80,0)
%\letvertex s73=(90,0)
%\letvertex s74=(100,0)
%\letvertex s75=(110,0)
%\letvertex s76=(120,0)
%\letvertex s77=(130,0)
%\letvertex s78=(140,0)
%\letvertex s79=(150,0)
%\letvertex s710=(160,0)
%\letvertex s711=(170,0)
%\letvertex s712=(180,0)
%\letvertex s713=(190,0)
%\letvertex s714=(200,0)
%\letvertex s715=(210,0)
%\letvertex s716=(220,0)
%\letvertex s717=(230,0)
%\letvertex s718=(240,0)
%\letvertex s719=(250,0)
%\letvertex s720=(260,0)
%\letvertex s721=(270,0)
%\letvertex s722=(280,0)
%\letvertex s723=(290,0)
%\letvertex s724=(300,0)
%\letvertex s725=(310,0)
%\letvertex s726=(320,0)
%\letvertex s727=(360,0)
%
\begin{center}
\resizebox{\textwidth}{!}{
{\Huge{\tt
\begin{picture}(250,120) %(360,80)
%\drawvertex(g2){$g=2$}
%\drawvertex(g3){$g=3$}
%\drawvertex(g4){$g=4$}
%\drawvertex(g5){$g=5$}
%\drawvertex(g6){$g=6$}
%\drawvertex(g7){$g=7$}
%
\drawvertex(s0){1}
\drawundirectededge(s0,s1){}
\drawvertex(s1){2}
\drawundirectededge(s1,s21){}
\drawundirectededge(s1,s22){}

\drawvertex(s21){1}
\drawvertex(s22){3}
\drawundirectededge(s21,s31){}
\drawundirectededge(s22,s32){}
%\drawundirectededge(s22,s33){}
\drawundirectededge(s22,s34){}
\drawundirectededge(s22,s35){}

\drawvertex(s31){0}
\drawvertex(s32){0}
\drawvertex(s33){1}
\drawvertex(s34){2}

\drawvertex(s33){\makebox(0,0){\resizebox{1.5cm}{!}{\alert{$\times$}}}}
\drawvertex(s35){\alert{4}}
%\qbezier(50,82)(60,92)(70,82)
%\put(50,60){\alert{\circle{4}}}
%\put(50,60){\makebox(0,0){\alert{4}}}

\drawundirectededge(s34,s41){}
\drawundirectededge(s34,s42){}
\drawundirectededge(s35,s43){}
\drawundirectededge(s35,s44){}
%\drawundirectededge(s35,s45){}
\drawundirectededge(s35,s46){}
\drawundirectededge(s35,s47){}

\drawvertex(s41){0}
\drawvertex(s42){1}
\drawvertex(s43){0}
\drawvertex(s44){1}
\drawvertex(s45){2}
\drawvertex(s46){3}

\drawvertex(s45){\makebox(0,0){\resizebox{1.5cm}{!}{\alert{$\times$}}}}
\drawvertex(s47){\alert{5}}
%\put(100,40){\alert{\circle{4}}}
%\put(100,40){\makebox(0,0){\alert{5}}}

\drawundirectededge(s42,s51){}
\drawundirectededge(s44,s52){}
\drawundirectededge(s46,s53){}
\drawundirectededge(s46,s54){}
\drawundirectededge(s46,s55){}
\drawundirectededge(s47,s56){}
\drawundirectededge(s47,s57){}
\drawundirectededge(s47,s58){}
%\drawundirectededge(s47,s59){}
\drawundirectededge(s47,s510){}
\drawundirectededge(s47,s511){}

\drawvertex(s51){0}
\drawvertex(s52){0}
\drawvertex(s53){0}
\drawvertex(s54){1}
\drawvertex(s55){2}
\drawvertex(s56){0}
\drawvertex(s57){1}
\drawvertex(s58){2}
\drawvertex(s59){3}
\drawvertex(s510){4}

\drawvertex(s59){\makebox(0,0){\resizebox{1.5cm}{!}{\alert{$\times$}}}}
\drawvertex(s511){\alert{6}}
%\put(170,20){\alert{\circle{4}}}
%\put(170,20){\makebox(0,0){\alert{6}}}

\drawundirectededge(s54,s61){}
\drawundirectededge(s55,s62){}
\drawundirectededge(s55,s63){}
\drawundirectededge(s57,s64){}
\drawundirectededge(s58,s65){}
\drawundirectededge(s58,s66){}
\drawundirectededge(s510,s67){}
\drawundirectededge(s510,s68){}
\drawundirectededge(s510,s69){}
\drawundirectededge(s510,s610){}
\drawundirectededge(s511,s611){}
\drawundirectededge(s511,s612){}
\drawundirectededge(s511,s613){}
\drawundirectededge(s511,s614){}
%\drawundirectededge(s511,s615){}
\drawundirectededge(s511,s616){}
\drawundirectededge(s511,s617){}
\drawvertex(s61){0}
\drawvertex(s62){0}
\drawvertex(s63){1}
\drawvertex(s64){0}
\drawvertex(s65){0}
\drawvertex(s66){1}
\drawvertex(s67){0}
\drawvertex(s68){1}
\drawvertex(s69){2}
\drawvertex(s610){3}
\drawvertex(s611){0}
\drawvertex(s612){1}
\drawvertex(s613){2}
\drawvertex(s614){3}
\drawvertex(s615){4}
\drawvertex(s616){5}

\drawvertex(s615){\makebox(0,0){\resizebox{1.5cm}{!}{\alert{$\times$}}}}
\drawvertex(s617){\alert{7}}

%\put(240,0){\alert{\circle{4}}}
%\put(240,0){\makebox(0,0){\alert{7}}}
%
%\drawvertex(s71){0}
%\drawvertex(s72){0}
%\drawvertex(s73){0}
%\drawvertex(s74){0}
%\drawvertex(s75){1}
%\drawvertex(s76){0}
%\drawvertex(s77){1}
%\drawvertex(s78){2}
%\drawvertex(s79){0}
%\drawvertex(s710){0}
%\drawvertex(s711){1}
%\drawvertex(s712){0}
%\drawvertex(s713){1}
%\drawvertex(s714){2}
%\drawvertex(s715){0}
%\drawvertex(s716){1}
%\drawvertex(s717){2}
%\drawvertex(s718){3}
%\drawvertex(s719){4}
%\drawvertex(s720){0}
%\drawvertex(s721){1}
%\drawvertex(s722){2}
%\drawvertex(s723){3}
%\drawvertex(s724){4}
%\drawvertex(s725){5}
%\drawvertex(s726){6}
%
%\drawvertex(s725){\makebox(0,0){\resizebox{1.5cm}{!}{\alert{$\times$}}}}
%\drawvertex(s727){\alert{8}}
%
%\drawundirectededge(s63,s71){}
%\drawundirectededge(s66,s72){}
%\drawundirectededge(s68,s73){}
%\drawundirectededge(s69,s74){}
%\drawundirectededge(s69,s75){}
%\drawundirectededge(s610,s76){}
%\drawundirectededge(s610,s77){}
%\drawundirectededge(s610,s78){}
%\drawundirectededge(s612,s79){}
%\drawundirectededge(s613,s710){}
%\drawundirectededge(s613,s711){}
%\drawundirectededge(s614,s712){}
%\drawundirectededge(s614,s713){}
%\drawundirectededge(s614,s714){}
%\drawundirectededge(s616,s715){}
%\drawundirectededge(s616,s716){}
%\drawundirectededge(s616,s717){}
%\drawundirectededge(s616,s718){}
%\drawundirectededge(s616,s719){}
%\drawundirectededge(s617,s720){}
%\drawundirectededge(s617,s721){}
%\drawundirectededge(s617,s722){}
%\drawundirectededge(s617,s723){}
%\drawundirectededge(s617,s724){}
%\drawundirectededge(s617,s725){}
%\drawundirectededge(s617,s726){}
%%\drawundirectededge(s617,s727){}
\end{picture}}}}\end{center}
$$\vdots$$
\caption{Tree $A$. It is a subtree of the tree of numerical semigroups.}
\label{fig:arbreA}
\end{figure}
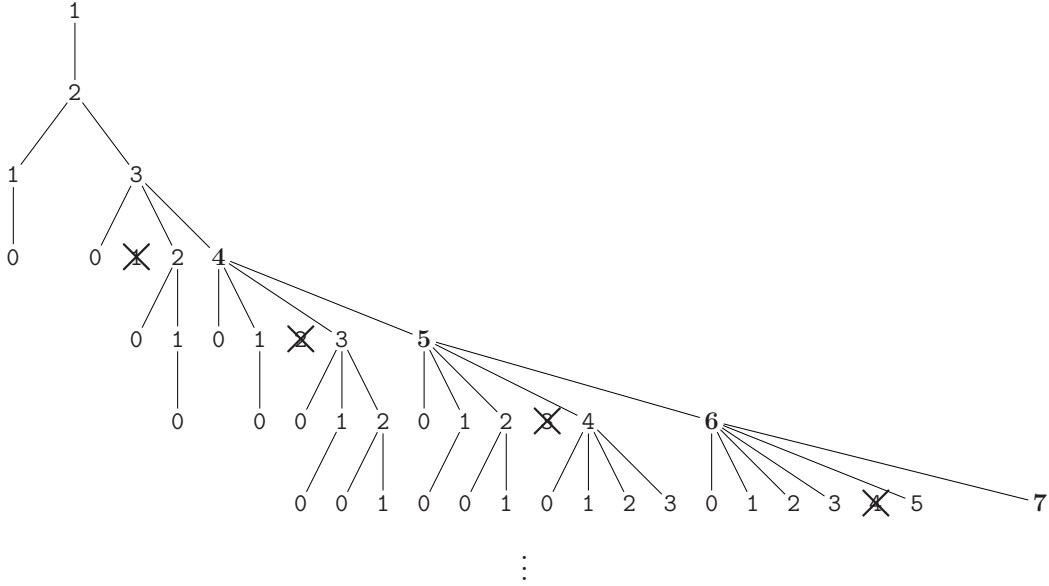

The next Proposition observes that no matter how a tree behaves at the beginning,
if at some point its generation rule coincides with the one of $A$, the Fibonacci behavior is observed from some point on.

\begin{prop}
  Let $l\geq 2$ be an integer and let $L_l$ be a multiset composed of some (maybe with repetitions) numbers $\le l-2$, and numbers $l-1$ and 
  $l+1$.
  For $k>l$ define recursively
  $$
    L_k=\{k+1\}\bigcup\Big(\bigcup_{m\in L_{k-1}} \{0,1,\dots,m-1\}\Big)\setminus\{k-2\}.
  $$
  Then, for all $k\ge 2l$:
  $$
    |L_k|=|L_{k-1}|+|L_{k-2}|.
  $$
Even more: $|L_k|=2F_k$.
  \label{start-from-arbitrary}
\end{prop}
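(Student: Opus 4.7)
The plan is to isolate, by linearity of the recursion, two independent cascades growing out of $L_l$: a \emph{main} cascade seeded by the pair $\{l-1,\, l+1\}$, and an \emph{extras} cascade seeded by the remaining entries of $L_l$, all of value $\leq l-2$. The key observation is that the height of the extras cascade strictly decreases at each step, so it vanishes in at most $l-1$ further levels. From that point on the recursion producing $L_k$ coincides with the recursion producing the canonical tree $A$ of Figure~\ref{fig:arbreA}, whose sizes $|A_k|=2F_k$ are already known from \cite{Bras:ngbounds}.

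Concretely, I set $M_l:=\{l-1, l+1\}$ and $E_l:=L_l\setminus M_l$ and propagate them separately, assigning the global corrections $\cup\{k+1\}$ and $\setminus\{k-2\}$ entirely to the $M$-side:
\[
M_k=\{k+1\}\cup\Bigl(\bigcup_{m\in M_{k-1}}\{0,\dots,m-1\}\Bigr)\setminus\{k-2\},
\qquad
E_k=\bigcup_{m\in E_{k-1}}\{0,\dots,m-1\}.
\]
An induction on $k$ yields $L_k=M_k\sqcup E_k$ as multisets. The delicate point is that the unique copy of $k-2$ to be deleted in the formula for $L_k$ must be taken from the $M$-side: this is legitimate because the element $k$, introduced in $M_{k-1}$ by the previous step's $\cup\{(k-1)+1\}$, always contributes a copy of $k-2$ to the naive union, whereas the maximum value occurring in $E_{k-1}$ is bounded above by $2l-1-k$, which is strictly less than $k-1$ as soon as $k\geq l+1$. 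That same height estimate shows $E_k=\emptyset$ for every $k\geq 2l-1$, so $L_k=M_k$ from that level onward.

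It then remains to compare $M_k$ with $A_k$. Decomposing $A_l=M_l\sqcup D_l$ with $D_l:=A_l\setminus M_l$, the same linearity argument gives $A_k=M_k\sqcup D_k$, and a short induction on $l$ (or direct inspection of the construction of $A_l$ from Figure~\ref{fig:arbreA}) shows that every entry of $D_l$ is bounded by $l-3$, so the analogous vanishing argument yields $A_k=M_k$ for all $k\geq 2l-2$. Combining, $L_k=M_k=A_k$ for every $k\geq 2l-1$; invoking $|A_k|=2F_k$ from \cite{Bras:ngbounds} then gives $|L_k|=2F_k$ throughout the stated range $k\geq 2l$, and the Fibonacci recurrence is immediate from $2F_k=2F_{k-1}+2F_{k-2}$. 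The main technical obstacle is the bookkeeping of the decomposition step: one has to verify rigorously that the two global corrections can be carried by the $M$-side alone without interference from the $E$-side, which ultimately rests on the height estimate for $E_{k-1}$ recalled above.
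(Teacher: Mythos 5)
Your proof is correct and follows essentially the same route as the paper's: both arguments show that the seed entries $\leq l-1$ can influence only boundedly many further levels, so that from level roughly $2l$ onward $L_k$ coincides with the level sets $A_k$ of the tree $A$, and then invoke $|A_k|=2F_k$ from \cite{Bras:ngbounds}; your multiset decomposition $L_k=M_k\sqcup E_k$ is just a more explicit bookkeeping of the paper's observation that an element $m\in L_s$ with $m\neq s+1$ cannot affect $L_{s'}$ for $s'>s+m$. One caveat shared with the paper: this strategy yields $|L_k|=2F_k$ only from level about $2l-1$ on, hence the recurrence $|L_k|=|L_{k-1}|+|L_{k-2}|$ only for $k\geq 2l+1$ rather than $k\geq 2l$ (and at $k=2l$ it can genuinely fail, e.g.\ $l=2$, $L_2=\{0,1,3\}$ gives $|L_2|=3$, $|L_3|=4$, $|L_4|=6\neq 7$), but that is an imprecision in the statement rather than a defect of your argument.
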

\begin{proof}
In \cite{Bras:ngbounds} it is proven that for $l=2$ and $L_2=\{1,3\}$,
the recursively defined sets $L_k$ satisfy $|L_k|=2F_k$ for all $k\geq 2$.
This proves the lemma in the particular case in which
$l=2$ and $L_2=\{1,3\}$.

Next we will prove that if $l$, $l'$ are integers and the multisets 
$L_l$, $L'_{l'}$ satisfy the hypothesis, then 
$L_k=L'_k$ for all $k\geq \max(2l,2l')$. This, together with the result in
\cite{Bras:ngbounds} will end the proof.

Suppose $m\in L_s$, $m\neq s+1$. Then $m$ gives rise to a subset $\{0,\dots,m-1\}\subseteq L_{s+1}$ and to a subset in $L_{s+2}$ whose maximum 
element is $m-2$ and 
to a subset in $L_{s+3}$ whose maximum element is $m-3$ and so on. 
However, the fact that $m\in L_s$ does not affect 
%$L_{k+m+1}$ nor any 
$L_{s'}$ for $s'>k+m$.
Similarly, the only element in $L_l$ that affects $L_k$ for any $k\geq 2l$ is $l+1$. Consequently, $L_k=L'_k$ for any $k\geq 2l$.
\end{proof}

A rough idea of future approaches to the Fibonacci problem would be: observe that the number of strong generators becomes negligible compared 
to all effective generators as $g\to\infty$, then the semigroup tree behaves more and more like the tree $A$ from \cite{Bras:ngbounds}.
So roughly speaking we are in the situation of Proposition \ref{start-from-arbitrary}. Pushing this idea further could help to solve the 
Fibonacci conjecture.

Finally, we would like to mention some computational evidence that suggests that strong generators appear quite regularly.
Let $n^i_g$ be the number of numerical semigroups of genus $g$ with $i$ strong generators. Then we conjecture that

$$
  n^i_g= 0 \mbox{ for } i>\Big\lfloor\frac{g-1}{2}\Big\rfloor.
$$

It is observed that as $g$ increases, $n^{\lfloor\frac{g-1}{2}\rfloor-j}_g$ 
approaches a constant for $g$ even and another constant for $g$ odd. 
So, we can define
two sequences 
$$
  \begin{array}{l}
   e_j=\lim_{k \to \infty} n^{k-1-j}_{2k},\\
   o_j=\lim_{k \to \infty} n^{k-j}_{2k+1}.
  \end{array}
$$

The first terms of the sequence $e$ have been observed to be 
$$2,2,5,12,21,45.$$
And the first terms of the sequence $o$ have been observed to be
$$1,2,3,8,14,34-35.$$
It seems that $e_j\geq \sum_{l=0}^{j-1}e_l$ and the same for $o$, so we conjecture in particular that the $e$- and $o$-sequences are 
superincreasing.

\section{Conclusions}
In this paper we went a step further on the study of the structure of the semigroup tree.
Namely we described the nodes that correspond to some well-studied
classes of numerical semigroups, like symmetric, pseudosymmetric and Arf. Apart from this we also considered what kind of chains appear in the
semigroup tree. Namely, when a node (semigroup) belongs to an infinite chain, and when the number of such chains is finite/infinite. We
concluded the paper with some conjectures and observations regarding the number of strong generators. These conjectures hopefully can help
in tackling the Fibonacci problem.

\section*{Acknowledgement}
This work was partly supported by the Spanish Ministry of Education
through projects TSI2007-65406-C03-01 "E-AEGIS" and
CONSOLIDER CSD2007-00004 "ARES",
and by the Government of Catalonia under grant 2005 SGR 00446. The second author was partially funded by the DASMOD Cluster of Excellence
in Pheinland-Palatinate.
The second author would like to thank his Ph.\,D. supervisor Prof.\,Dr. Gert-Martin Greuel and his second supervisor Prof.\,Dr. Gerhard Pfister 
for continuous support and encouragement.

%\bibliographystyle{plain}
%\bibliography{../../Tot/Pa/Documents/Recerca/Publicacions/0000_Ng_Bounds/refes}

\end{document}